\documentclass[11pt]{article}
\usepackage[latin1]{inputenc}
\usepackage{amsmath}
\usepackage{amsfonts}
\usepackage{amsthm}
\usepackage{amssymb,amsthm, stmaryrd, mathabx}
\usepackage{graphicx}
\usepackage{xcolor}
\usepackage{multicol}
\usepackage{soul}
\usepackage[normalem]{ulem}
\usepackage{float}

\textwidth=6.0in \textheight=8.5in \evensidemargin=0in
\oddsidemargin=0in \topmargin=0in \topskip=0pt \baselineskip=12pt

\newtheorem{theorem}{Theorem}[section]
\newtheorem{conj}[theorem]{Conjecture}
\newtheorem{proposition}[theorem]{Proposition}
\newtheorem{problem}{Problem}
\newtheorem{question}{Question}
\newtheorem{corollary}[theorem]{Corollary}
\newtheorem{defi}[theorem]{Definition}
\newtheorem{lemma}[theorem]{Lemma}
\newtheorem{remark}[theorem]{Remark}
\newtheorem{example}[theorem]{Example}
\newtheorem{obs}[theorem]{Observation}

\newcommand{\fer}{{\rm Fer}}

\def\restrict#1{\raise-.5ex\hbox{\ensuremath|}_{#1}}


\title{Graphs isomorphisms under edge-replacements\\ and the family of amoebas}

\begin{document}

\maketitle

\begin{center}

\begin{multicols}{2}

Yair Caro\\[1ex]
{\small Dept. of Mathematics\\
University of Haifa-Oranim\\
Tivon 36006, Israel\\
yacaro@kvgeva.org.il}

\columnbreak

Adriana Hansberg\\[1ex]
{\small Instituto de Matem\'aticas\\
UNAM Juriquilla\\
Quer\'etaro, Mexico\\
ahansberg@im.unam.mx}\\[2ex]

\end{multicols}

Amanda Montejano\\[1ex]
{\small UMDI, Facultad de Ciencias\\
UNAM Juriquilla\\
Quer\'etaro, Mexico\\
amandamontejano@ciencias.unam.mx}\\[4ex]

\end{center}

\begin{abstract}
This paper offers a systematic study of a family of graphs called amoebas. Amoebas recently emerged from the study of forced patterns in $2$-colorings of the edges of the complete graph in the context of Ramsey-Turan theory and played an important role in  extremal  zero-sum problems. 
Amoebas are graphs with a unique behavior with regards to the following operation: Let $G$ be a graph and let $e\in E(G)$ and $e'\in E(\overline{G})$.  If the graph $G'=G-e+e'$ is isomorphic to $G$, we say $G'$ is obtained from $G$ by performing a \emph{feasible edge-replacement}.  We call $G$ a \emph{local amoeba} if, for any two copies $G_1$, $G_2$ of $G$ on the same vertex set, $G_1$ can be transformed into $G_2$ by a chain of feasible edge-replacements. On the other hand, $G$ is called \emph{global amoeba} if there is an integer $t_0 \ge 0$ such that $G \cup tK_1$ is a local amoeba for all $t \ge t_0$. 
To model the dynamics of the feasible edge-replacements of $G$, we define a group $\fer(G)$ that satisfies that $G$ is a local amoeba if and only if $\fer(G) \cong S_n$, where $n$ is the order of $G$. Via this algebraic setting, a deeper understanding of the structure of amoebas and their intrinsic properties comes into light.  
Moreover, we present different constructions that prove the richness of these graph families showing, among other things, that any connected graph can be a connected component of a global amoeba, that global amoebas can be very dense and that they can have, in proportion to their order, large clique and chromatic numbers. Also, a family of global amoeba trees with a Fibonacci-like structure and with arbitrary large maximum degree is constructed.
\end{abstract}

\section{Introduction}

Graphs called amoebas first appeared in \cite{CHM3} where certain Ramsey-Tur\'an extremal problems were considered, which dealt with the existence of a given graph with a prescribed color pattern in $2$-edge-colorings of the complete graph. More precisely, amoebas arose from the search of a graph family with certain interpolation properties that could be suitable to show balanceability or omnitonal properties \cite{CHM3} (see also \cite{CHLZ}). The graphs named just ``amoebas'' in \cite{CHM3} are called in this paper, with their rich, Matroid resembling properties, ``global amoebas'', as we will distinguish them from a similar family that we call ``local amoebas''. For the interested reader, we refer to \cite{BaNiWh, HaMoPl, HaPl, LiLiTo, Pun1, Pun3, Zhou2} for more literature related to interpolation techniques in graphs. 

The feature that makes amoebas work are one-by-one replacements of edges, where, at each step, some edge is substituted by another such that an isomorphic copy of the graph is created. We call such edge substitutions feasible edge-replacements.  Similar edge-operations have been studied, for instance, in \cite{CiZi, FrRi, FrHlRo, JaPaSc, Pun3, Ross}.  As introduced in \cite{CHLZ}, a family $\mathcal{F}$ of graphs, all of them having the same number of edges, is called \emph{closed} in a graph $H$ if, for every two copies $F, F'$ of members of $\mathcal{F}$ contained in $H$, there is a chain of graphs $H_1, H_2, \ldots, H_k$ in $H$ such that $H_1 \cong F$,  $H_k \cong F'$, and, for $2 \le i \le  k$,  $H_i$ is isomorphic to a member of $\mathcal{F}$ and $H_i$ is obtained from $H_{i-1}$ by interchanging one edge with another (an edge-replacement that is not necessarily feasible).  Perhaps the most well-known closed family is the family of all spanning trees of a connected graph $H$, and the edge-replacement operation given above is in fact the basic operation in the exchange of bases in the cycle matroid $M(H)$ of $H$. A graph $G$ is a \emph{global amoeba} precisely when $\{G\}$ is a closed family in $K_n$ (for $n$ large enough), and it is a \emph{local amoeba} if $\{G\}$ is a closed family in $K_{n(G)}$. Exactly this global amoeba property is the key to the usefulness of amoebas in interpolation theorems in Graph Theory  and in zero-sum extremal problems \cite{CHLZ}, and in problems about forced patterns in $2$-colorings of the edges of $K_n$ \cite{CHM3}. We note at this point, once again, that the amoebas defined in \cite{CHM3} correspond to the class of global amoebas.

A first encounter with amoebas gives the impression that such graphs are very rare and have a very simple structure. This, however, is not the case and amoebas may have quite a complicated structure. Indeed, we will consider here different constructions with which we will show that any connected graph can be a connected component of a global amoeba (Theorem \ref{thm:conn_comp}), that global amoebas can be very dense  (in fact, with as many as $n^2/4$ edges, being $n$ the order of the graph) and, that they can have very large chromatic number and cliques, too  (as large as roughly half the order of the graph) (Theorem \ref{thm:GA_max_e(G)X(G)w(G)}). Also, we introduce an interesting family of global amoeba trees with a Fibonacci-like structure and with arbitrary large maximum degree (Theorem \ref{thm:Ti_GA}). 

Most concepts and definitions concerning amoebas can be stated in graph theoretical language. However, a group theoretical setting with which the dynamics of the edge replacements are modeled -- and which involve graph isomorphisms -- will be necessary for proving several results. In particular, the proof of Theorem 3.8, which is a key element for many other results, employs tools of permutation groups, which via graph theoretical language would be too intricate. This is the reason why we will develop an algebraic theoretical setting so that we can then formalize all concepts and definitions by means of the permutation language, and we will proceed further on with the theory this way. The research on amoebas is related to problems like switching in graphs \cite{BaMoRa, Hor}, reconfiguration problems \cite{IDHPSUU, MeNoPo}, token graphs \cite{CaDuPa, FFHHUW, LeTr} and, with respect to aspects of group action language, to reconstruction problems in graphs \cite{Czi05, HKNS10, LaSc}. Similar approaches that deal with graph isomorphisms can be found in \cite{ACKR89, RaSc06, Sep19}. For group theoretical concepts, we refer to \cite{Isa}.

The paper is organized as follows. In Section \ref{sec:theor_set1}, we define local and global amoebas formally by means of graph theoretical language, and present some basic results achievable by graph theoretical tools. In Section \ref{sec:theor_set2}, we introduce the group theoretical tools that will let us model how the so-called feasible edge-replacements work via permutations. By means of this algebraic setting, we will reformulate the definitions of global amoeba and local amoeba (Definition \ref{def:amoebas}). Section \ref{sec:main} contains our main result (Theorem \ref{thm:eq}) which displays different characterizations of global amoebas by which the relation between global and local amoebas is very clearly established. Moreover, it is shown, using non-trivial examples, how the characterization is very handy.  In Section \ref{sec:constr},  we will present some interesting constructions of both local and global amoebas that will exhibit the richness of this family of graphs. \\

\begin{figure}[H]
\begin{center}
	\includegraphics[scale=0.48]{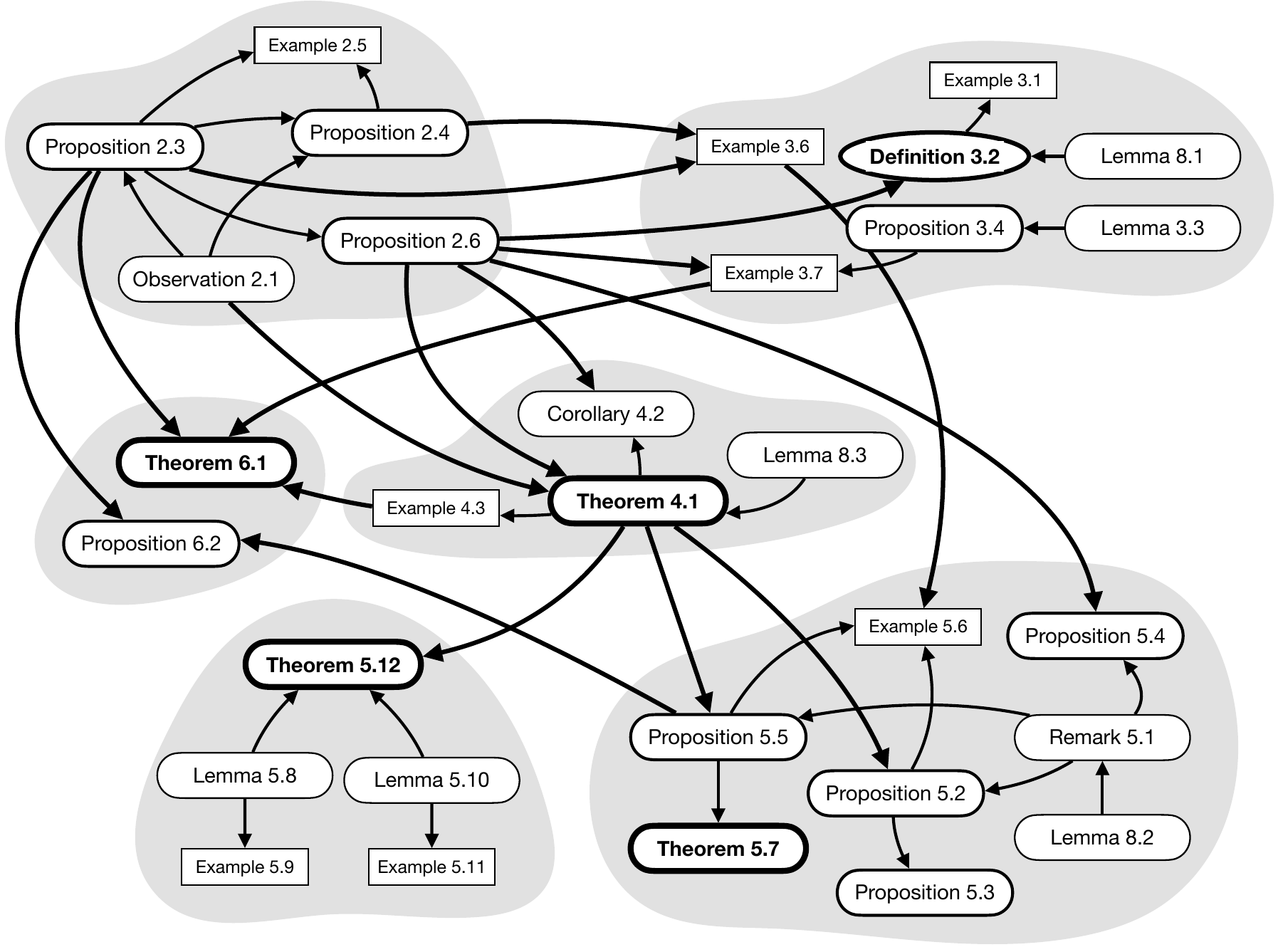}
	\caption{Results dependence scheme}
	\label{fig:structure}
\end{center}	
\end{figure}

For many of these results, Theorem \ref{thm:eq} is a crucial tool. In Section \ref{sec:e(G)X(G)w(G)}, we exhibit extremal global amoebas with respect to size, chromatic number and clique number. In order to show the purpose of the results, we will illustrate with abundant examples. In the final section, we provide the reader with several open problems which could bring more light to understanding this very interesting family of graphs called amoebas. Figure \ref{fig:structure} depicts a scheme that captures the structure and the dependence between the results of this paper. The scheme includes Definition \ref{def:amoebas} to emphasize that the developed theory in Section \ref{sec:theor_set1} is needed to be able to establish the definition of local and global amoebas via the algebraic approach. It is important to note that all subsequent results depend on this definition and its group theoretical setting.

\section{Amoebas: graph theoretical approach}\label{sec:theor_set1}

As it was said in the introduction, the graph class of amoebas arose from the search of a family with nice interpolation properties that work in solving  certain Ramsey-Tur\'an extremal problems in $2$-edge-colorings of the complete graph, see \cite{CHM3}. In order to define amoebas, we need to formally establish what a feasible edge-replacement is.

Let $G$ be a nonempty and non-complete graph. Given $e\in E(G)$ and $e'\in E(\overline{G})
$, we say that the graph $G' = G-e+e'$ is obtained from $G$ by replacing the edge $e$ with $e'$. 
If $G'$ is a graph isomorphic to $G$, we say that the edge-replacement is \emph{feasible} and that \emph{$G'$ is obtained from $G$ by a feasible edge-replacement}. We also need to consider the \emph{neutral edge-replacement} 
as a feasible edge-replacement,  which is given when no edge is replaced at all.  
Note that every graph has at least one feasible edge replacement, namely the neutral edge-replacement.

The following observation is a direct consequence of the definition of feasible edge-replacement.

\begin{obs}\label{obs:trivialDEG}
Let $G$ be a nonempty and non-complete graph. Let $uv \in E(G)$ and $u'v' \in E(\overline{G})$. Suppose that $G' = G -uv + u'v' \cong G$. Then we have, for any vertex $w \in V(G)$,
\begin{align}
\deg_{G'}(w) = \left\{ \begin{array}{ll}
\deg_G(w)-1, \mbox{ if } w \in \{u,v\}  \setminus \{u',v'\}\\
\deg_G(w)+1, \mbox{ if } w \in \{u',v'\}  \setminus \{u,v\}\\
\deg_G(w), \mbox{ else.}\\
\end{array}
\right.
\end{align}  
\end{obs}

Thus, for a graph that is nonempty and non-complete, there are two types of non-neutral feasible edge-replacements (if any), according to how many vertices ($0$ or $1$) share the edge we remove and the edge we insert. Now we will define global and local amoebas.\\

\begin{defi}
A graph $G$ is called a \emph{local amoeba} if, for any two isomorphic copies of $G$ on the same vertex set $V = V(G)$, say $F$ and $H$, there is a chain $F=G_0,G_1,G_2,...,G_{\ell} =H$ such that, for every $1\leq i \leq \ell$, $G_i\cong G$ and $G_i$ is obtained from $G_{i-1}$ by a feasible edge-replacement. Moreover, $G$ is called \emph{global amoeba} if there is an integer $t_0 \ge 0$ such that $G \cup tK_1$ is a local amoeba for every $t \ge t_0$.
\end{defi}

By means of this innocent technical definition of global amoeba, one can imagine how it is possible to move from any copy of a global amoeba $G$ in $K_n$ to any other copy of $G$ in $K_n$, if $n$ is large enough. We shall see later on that  $n =  n(G) +1$ (i.e. $t_0 = 1$) suffices. The following is a simple but very useful result.

\begin{proposition}\label{prop:la_degrees}
Let $G$ be a graph with minimum degree $\delta$ and maximum degree $\Delta$.  If $G$ is a local amoeba then, for every integer $r$ with $\delta \leq r \leq \Delta$, there is a vertex $v\in V(G)$ with $\deg_G(v)=r$. If $G$ is a global amoeba the same is true and, moreover,  $\delta\leq 1$. 
\end{proposition}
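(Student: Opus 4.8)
The plan is to exploit two facts: that a local amoeba $G$ admits a chain of feasible edge-replacements reaching \emph{any} labeled copy $G_\sigma$ (since $S_G = S_n$), and that along such a chain the degree of any fixed position $v_p$ changes by at most one at each step (Remark \ref{rem:degrees_copy}). Combining these through a discrete intermediate-value argument will force every intermediate degree value to be realized, and I would set it up as a direct proof rather than by contradiction.

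First I would record the elementary computation that, for any $\sigma \in S_n$, the degree of position $v_p$ in the copy $G_\sigma$ equals $\deg_G(v_{\sigma(p)})$; this follows directly from $E(G_{\sigma}) = \{v_{\sigma^{-1}(i)}v_{\sigma^{-1}(j)} \mid ij \in L_G\}$, since an edge of $G_\sigma$ is incident to $v_p$ exactly when its label contains $\sigma(p)$. In particular, fixing a vertex $v_a$ with $\deg_G(v_a) = \Delta$ and a vertex $v_b$ with $\deg_G(v_b) = \delta$, the transposition $\sigma = (a\,b)$ yields a copy $G_\sigma$ in which position $v_a$ has degree $\delta$, whereas in $G = G_{\mathrm{id}}$ that same position has degree $\Delta$.

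Next, since $G$ is a local amoeba we have $\sigma \in S_G = S_n$, so there is a chain $G = G_{\rho_0}, G_{\rho_1}, \ldots, G_{\rho_m} = G_\sigma$ in which each $G_{\rho_k}$ arises from $G_{\rho_{k-1}}$ by a single feasible edge-replacement. Setting $d_k = \deg_{G_{\rho_k}}(v_a)$, Remark \ref{rem:degrees_copy} gives $\abs{d_k - d_{k-1}} \le 1$ for every $k$, while $d_0 = \Delta$ and $d_m = \delta$. A sequence of integers with steps of absolute value at most one and endpoints $\Delta$ and $\delta$ must attain every integer value between $\delta$ and $\Delta$; hence, for any prescribed $r$ with $\delta \le r \le \Delta$, there is an index $k$ with $d_k = r$. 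Because each $G_{\rho_k}$ is obtained by feasible edge-replacements, it is isomorphic to $G$, so the vertex $v_a$ of degree $r$ in $G_{\rho_k}$ yields a vertex of degree $r$ in $G$ itself.

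I do not anticipate a serious obstacle here; the one point requiring care is the bookkeeping that lets us track a single position across the chain --- namely pinning down a copy $G_\sigma$ that carries a maximum-degree position to minimum degree --- together with the observation that each \emph{individual} feasible edge-replacement (not merely the net effect of the whole chain) changes a degree by at most one, which is precisely the content of Remark \ref{rem:degrees_copy}. Once these are in place, the discrete intermediate-value step is immediate.
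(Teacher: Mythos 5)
Your proof is correct and takes essentially the same route as the paper: both exploit $S_G = S_n$ to obtain the transposition swapping a minimum-degree and a maximum-degree vertex, decompose it into a chain of single feasible edge-replacements, and apply a discrete intermediate-value argument based on Remark \ref{rem:degrees_copy}, concluding via the isomorphism of every intermediate copy with $G$. The only cosmetic differences are that you track the position of maximum degree descending to $\delta$ (the paper tracks the minimum-degree position ascending to $\Delta$) and you phrase the intermediate-value step directly rather than by contradiction.
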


\begin{proof}
Let $G$ be a local amoeba embedded in $K_n$, and let $v\in V(G)$ with $\deg_G(v)=\delta$. Let $H$ be an isomorphic copy of $G$ embedded in $K_n$ with $\deg_H(v)=\Delta$. Since $G$ is a local amoeba we know there is a chain $G=G_0,G_1,G_2,...,G_{\ell} =H$ such that, for every $1\leq i \leq \ell$, $G_i\cong G$ and $G_i$ is obtained from $G_{i-1}$ by a feasible edge-replacement. Then, by setting  $d_i=\deg_{G_i}(v)$, we have a sequence of integers $d_0,d_1,...,d_{\ell}$ where $d_0=\delta$, $d_{\ell}=\Delta$ and, by Observation \ref{obs:trivialDEG}, for every $0\leq i\leq\ell -1$, we have $d_i-1\leq d_{i+1}\leq d_i+1$. Now, fix $r$ with $\delta +1\leq r \leq \Delta-1$, and let $m=\max \{i:d_i\leq r\}$. We claim that $d_m=r$, otherwise, if $d_m<r$ then $d_{m+1}\leq d_m+1<r+1$, thus $d_{m+1}\leq r$ which is a contradiction (by the definition of $m$). Hence, we find a vertex of degree $r$ in $G_m$ which is graph isomorphic to $G$. 

If $G$ is a global amoeba, let $G$ be embedded in $K_N$ for some $N>n$. Let $v\in V(K_N)\setminus V(G)$ and recall that we set  $\deg_G(v) = 0$.  To complete the proof, we proceed as in the previous paragraph to find vertices of degree $r$ for every $1\leq r \leq \Delta-1$.
\end{proof}

By Observation \ref{obs:trivialDEG}, regular graphs have the neutral edge-replacement as its only feasible edge-replacement. Hence, no regular graph, except  for the complete or the empty graph, can be a local amoeba. Similarly, no regular graph, except for $\frac{n}{2}K_2$, for even $n$, or $\overline{K_n}$ can be a global amoeba. We use Proposition \ref{prop:la_degrees} to formalize the above concerning not only regular graphs but also graphs that have the neutral edge-replacement as its only feasible edge-replacement. 

\begin{proposition}\label{prop:neutralFER}
Let $G$ be a graph of order $n\geq 1$ having the neutral edge-replacement as its only feasible edge-replacement.  Then 
\begin{enumerate}
    \item[(i)] $G$ is  a local amoeba if and only if  either $G=K_n$, or  $G=\overline{K_n}$;
    \item[(ii)] $G$ is a global amoeba if and only if either $G= \frac{n}{2}K_2$, for even $n$, or $G=\overline{K_n}$.
\end{enumerate}
\end{proposition}

\begin{proof}
(i) Supposing that $G$ is a local amoeba, the fact of $G$ having the neutral edge-replacement as its only feasible edge-replacement means that every isomorphic copy $G'$ of $G$ on the same vertex set $V(G)$ has the same set of edges as $G$, that is, $E(G') = E(G)$. This can only happen when $G=K_n$, or $G=\overline{K_n}$, which are clearly local amoebas.\\

\noindent
(ii) The fact that $G=\overline{K_n}$ is a global amoeba having only the neutral edge-replacement as a feasible edge-replacement is obvious. Since $G= \frac{n}{2}K_2$ is a $1$-regular graph, then its only feasible edge-replacement is the neutral one. To see that   $G= \frac{n}{2}K_2$ is a global amoeba is not hard if we consider enough extra isolated vertices. We leave it as an exercise to argue that  $\frac{n}{2}K_2\cup tK_1$ is a local amoeba for every $t\geq n+4$ (in any case, we will provide a general argument to prove this in Section \ref{sec:theor_set2}).

Suppose now that $G$ is a global amoeba having the neutral edge-replacements as its only feasible edge-replacement.  By Proposition \ref{prop:la_degrees}, $G$ has minimum degree $\delta(G) \le 1$. If $\delta (G)= 0$, then $G=\overline{K_n}$ (otherwise, $G$ would have vertices of positive degree and, by Proposition \ref{prop:la_degrees}, there would be a vertex $x$ of degree $1$, which would imply the existence of a feasible edge replacement in $G$ involving the edge incident to $x$ and the isolated vertex). If $\delta(G) = 1$, we will prove that $G= \frac{n}{2}K_2$ by showing that $\Delta(G)=1$. Suppose to the contrary that  $\Delta(G) \ge 2$. By definition, there is an integer $t_0 \ge 0$ such that $G \cup tK_1$ is a local amoeba for every $t \ge t_0$. Since $G$ has a vertex of degree at least $2$, there must be two copies $H$ and $H'$ of $G \cup tK_1$ such that $H'$ is obtained from $H$ by a feasible edge-replacement and such that a vertex $v$ of $H$ with $\deg_H(v) \ge 2$ gets its degree reduced by one. By Observation~\ref{obs:trivialDEG}, this can only happen if there are vertices $u, u', v'$ such that $H' = H - uv + u'v'$ and $v\notin \{u',v'\}$.  Now, the fact that $G$ has no feasible edge-replacement different from the neutral one implies that all feasible edge-replacements of $H$ have to involve isolated vertices. Thus, some of $u'$ or $v'$ is an isolated vertex, say $\deg_H(u')=0$. This forces that $\deg_H(u)=1$ and, since $H$ and $H'$ are isomorphic graphs, it follows that $\deg_{H}(v') =\deg_H(v)$. But, in such a situation, replacing the edge $uv$ by $uv'$ would represent a feasible non-neutral edge-replacement of $G$, which does not exist. Hence, $\Delta(G) = 1$ and we are done.
\end{proof}

In the following example, we exhibit simple  graphs concerning all possibilities of being, or not, a local or a global amoeba. Items (i), (ii), and (iii) follow easily or directly from Propositions~\ref{prop:la_degrees} and \ref{prop:neutralFER}. We leave the formal proof of item (iv) as an exercise for the reader. Later on, we will provide other arguments by which this can be proven quite smoothly (see Example \ref{ex:PnandtK2}).

\begin{example}\label{ex:amoebas_basic}
\mbox{}
\begin{enumerate}
\item[(i)]  Stars $K_{1,n-1}$, with $n\geq 4$, are neither local nor  global amoebas. Also, graphs of order $n$ that are $r$-regular, for $2 \le r \le n-2$, are neither local nor  global amoebas. 

\item[(ii)]  For every $n\geq 3$, the complete graph $K_n$ is a local but not a global amoeba. 

\item[(iii)]  For every even $n\geq 4$, the graph $\frac{n}{2}K_2$ is a global but not a local amoeba.  

\item[(iv)]  For every $n\geq 2$, the  path $P_n$ on $n$ vertices is  both, a local amoeba and a global amoeba. 
\end{enumerate}
\end{example}

With the aim of showing the richness of both classes of local and global amoebas,  we will see more examples throughout the paper.  The following proposition gives us useful information about graphs with minimum degree $0$ or $1$.

\begin{proposition}\label{prop:localUK1=local} 
Let $G$ be a graph of order $n$. If $G$ is a local amoeba with $\delta(G) \in \{0, 1\}$, then $G \cup K_1$ is a local amoeba, and so $G$ is a global amoeba.
\end{proposition}

\begin{proof}
Let $G$ be a local amoeba of order $n$ with $\delta(G) \in \{0, 1\}$. First of all note that,  if $G = \overline{K_n}$, we are done. Hence, $\delta(G) \le 1 \le \Delta(G)$, which, in view of Proposition \ref{prop:la_degrees}, implies that $G$ has a vertex of degree $1$. 

Define $G'=G\cup K_1$. We need to prove that for any two isomorphic copies of $G'$ on the vertex set $V'= V(G')$, say $F'$ and $H'$, there is a chain of feasible edge-replacements that takes $F'$ to $H'$. Since both $F'$ and $H'$ are isomorphic to $G'$, we know that there are vertices $u$ and $w$ such that  $\deg_{F'}(u)=0$ and $\deg_{H'}(w)=0$. We consider two cases: either $u=w$, or $u\neq w$. In the first case, we are done, since $F'-u\cong H'-w\cong G$ and $G$ is a local amoeba. In the second case, we will make use of two auxiliary copies of $G$, $J'$ and $K'$, such that, by means of feasible edge-replacements, we will be able to transform $F'$ into $J'$, $J'$ into $K'$, and finally $K'$ into $H'$.
To this aim, consider a vertex $v$ of degree $1$ in $F'$. Note that both $v$ and $w$ belong to $V(F')\setminus \{u\}$. Since $F'-u\cong G$ and  $G$ is a local amoeba, we know there is a  chain of feasible edge-replacements that takes $F'-u$ into a graph $J$, isomorphic to $G$, where $\deg_{J}(w)=1$. But this chain of  feasible edge-replacements is also a chain of  feasible edge-replacements that takes $F'$ to a graph $J'$, where $u$ is an isolated vertex of $J'$ (that is, $J= J' - u$), and $\deg_{J'}(w)=1$. Let $y$ be the neighbor of $w$ in $J'$. Consider now the graph $K' = J' - yw +yu \cong G'$. Since $K = K'-w \cong H'-w \cong G$ and $G$ is a local amoeba, there is a chain of feasible edge-replacements that transforms $K$ into $H'-w$. Clearly, this chain of feasible edge-replacements can be also used to transform $K$ into $H$.
\end{proof}

In view of Proposition \ref{prop:localUK1=local} we conclude that, if one knows that a graph $G \cup t_0K_1$ is a local amoeba for some integer $t_0 > 0$, then $G \cup tK_1$ is a local amoeba for all $t \ge t_0$. Hence, to prove that a graph $G$ is a global amoeba, one has only to find a $t \geq 1$ for which $G \cup t K_1$ is a local amoeba. This means that the definition of global amoeba can be established as: $G$ is a global amoeba if there is an integer $t \ge 1$ such that $G \cup tK_1$ is a local amoeba.

A natural question that arises at this point is the following.
 
\begin{question}\label{q:t=1}
Let $G$ be a global amoeba. What is the minimum $t$ for which  $G \cup tK_1$ is a local amoeba? Does the value of such minimum $t$ depend on the structure of $G$?
\end{question}

Interestingly, it turns out that we just need $t =1$, and that occurs for any global amoeba. This is shown in Theorem \ref{thm:eq}, which is  a major achievement of this paper.

\section{Amoebas: algebraic approach}\label{sec:theor_set2}

For positive integers $m$ and $n$ with $m<n$ we use the standard notation $[n]=\{1,2,...,n\}$ and $[m,n]=\{m,m+1,m+2,,...,n\}$. For a finite set $X$, let $S_X$ be the symmetric group, whose elements are permutations of $X$, and let $S_n = S_{[n]}$. We will use the standard cycle notation when referring to particular permutations. The group of automorphisms of a graph $G$ is denoted by ${\rm Aut}(G)$. 
Throughout this paper, every graph $G$ we consider will be equipped with a labeling on its vertex set $\lambda : V(G) \to X$, which will be always a bijection. We define $v_x = \lambda^{-1}(x)$, for each $x \in X$. Let $L_G = \{ ij \;|\; v_iv_j \in E(G)\}$ be the set of labels of the edges of $G$, where we do not distinguish between $ij$ and $ji$.  For a permutation $\sigma \in S_{X}$, we define $G_{\sigma}$
as the copy of $G$ on the vertex set $V(G_{\sigma}) = V(G)$ and edge set 
\[E(G_{\sigma}) =  \{ v_iv_j  \;|\; \sigma(i)\sigma(j)\in L_G \}.\]

Observe that, for every graph $G'$ on $V(G)$ isomorphic to $G$ there are  $|{\rm Aut}(G)|$ different copies $G_{\sigma}$ that correspond to $G'$. That is, the set  $\{\sigma \in S_X \,|\, G_{\sigma}=G'\}$ has  $|{\rm Aut}(G)|$ elements. Moreover,  $\{\sigma \in S_X \,|\, G_{\sigma}=G\}\cong {\rm Aut}(G)$. We will set 
\[A_G = \{\sigma \in S_X \,|\, G_{\sigma}=G\}.\]

\begin{example}\label{ex:P4_0}
Let $G  \cong P_4$ with $V(G)=\{v_1,v_2,v_3,v_4\}$ and $E(G)=\{v_1v_2,v_2v_3,v_3v_4\}$. Given the labeling $\lambda : V(G) \to [4]$ with $\lambda(v_i) = i$, $i \in [4]$, we have $L_G=\{ 12,23,34\}$ and $\{\sigma \in S_4 \,|\, G_{\sigma}=G\}=\{id, (14)(23)\}\cong {\rm Aut}(P_4)$. For $G'$, the isomorphic copy of $G$  defined by $E(G')=\{v_1v_3,v_2v_3,v_2v_4\}$, we have two permutations, namely  $(23)$ and $(14)$, that satisfy $G_{(23)}=G_{(14)}=G'$. See Figure~\ref{fig:exP4_0} to visualize the corresponding labelings and observe that, in all cases,  $E(G_{\sigma}) =  \{ v_iv_j  \;|\; \sigma(i)\sigma(j) \in L_G \}$. For example, $E(G_{(23)})=  \{ v_iv_j  \;|\; \sigma(i)\sigma(j) \in L_G \}=\{v_1v_3,v_3v_2,v_2v_4\}$.
\end{example}

\begin{figure}[H]
\begin{center}
	\includegraphics[scale=0.5]{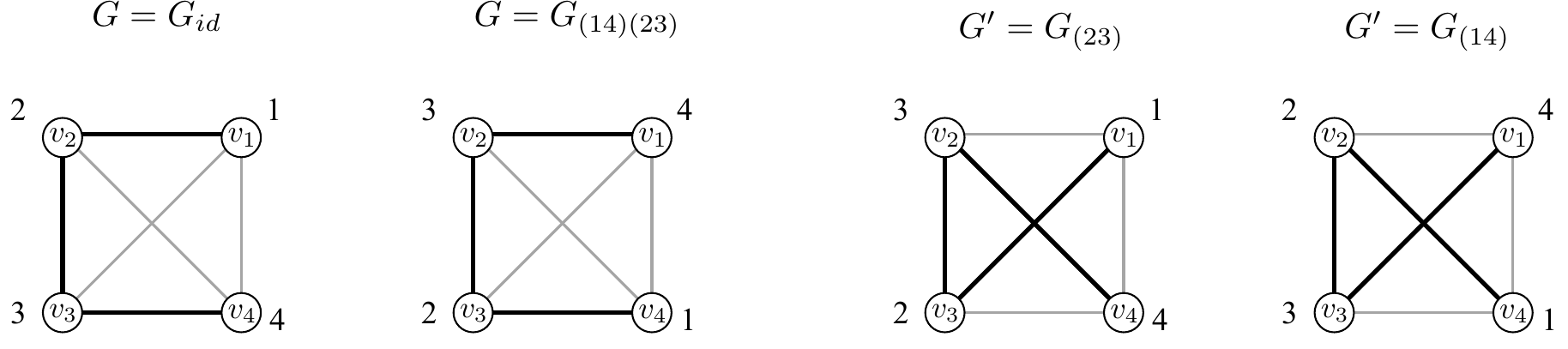}
	\caption{\small For $G=P_4$ with $V(P_4)=\{v_1,v_2,v_3,v_4\}$, $E(P_4)=\{v_1v_2,v_2v_3,v_3v_4\}$, and $\lambda(v_i) = i$, $i \in [4]$, we have $L_G=\{12,23,34\}$. The labelings corresponding to the permutations ${id},{(14)(23)},{(23)}$ and ${(14)} $ are depicted (left to right) showing the copies $G=G_{id}=G_{(14)(23)}$ and  $G'=G_{(23)}=G_{(14)}$, where $E(G_{\sigma})= \{ v_iv_j  \;|\; \sigma(i)\sigma(j) \in L_G \}$ in all cases.}
	\label{fig:exP4_0}
\end{center}	
\end{figure}

The key point of using labels on the vertices is to keep track of the role each vertex and each edge is playing in each of the copies of $G$. More precisely, note that the corresponding copies of the vertices and edges of $G$ in $G_{\sigma}$ are given by their labels: The copy of vertex $v_i$ of $G$ is the vertex of $G_{\sigma}$ having label $i$, while the copy of an edge $v_iv_j \in E(G)$ is the edge of $G_{\sigma}$ having label $ij$. It is important to note  that $L_{G_\sigma} = L_G$ for all $\sigma \in S_X$, i.e. the set of labels of the edges of $G$ remains invariant among all copies $G_{\sigma}$, $\sigma \in S_X$. Given the prescribed labeling, we can talk now about edge-replacements by means of elements in $L_G$. In this context, we will denote by $rs \to kl$ the edge-replacement that corresponds to deleting the edge with label $rs$ and adding the edge with label $kl$. When the edge-replacement $rs \to kl$ is \emph{feasible}, it means that $G - v_rv_s + v_kv_l \cong G$. We denote with $\emptyset \to \emptyset$ the \emph{neutral edge-replacement}, where no edge is replaced at all. We define now $$R_G = \{rs \to kl \;|\;  G - v_rv_s + v_kv_l \cong G, rs \neq kl\} \cup\{\emptyset \to \emptyset\}$$ as the set of all feasible edge-replacements of $G$ given by their labels together with the neutral edge-replacement.  Let $R_G^* = R_G \setminus \{\emptyset \to \emptyset\}$. We will use sometimes the notation $e \to e' \in R_G$ when we do not require to specify the labels of the vertices involved in the edge-replacement, and it includes the possibility that $e \to e'$ is the neutral edge-replacement.

Notice that, since feasible edge-replacements are defined by the labels of the edges, any $e \to e' \in R_G$ represents also a feasible edge-replacement of any copy $G_{\sigma}$, $\sigma \in S_X$. Hence, clearly $R_{G_{\sigma}} = R_G$ for any $\sigma \in S_X$. 

Given a feasible edge-replacement $rs \to kl\in R_G^*$ we will use the following notation   
\[\fer_G(rs \to kl)  = \{\sigma \in S_X \;|\; G_{\sigma} = G-v_rv_s+v_kv_l\},\]
and we will set $\fer_G(\emptyset \to \emptyset) = A_G$.

That is, $\fer_G(e \to e')$ is the set of permutations representing the $|{\rm Aut}(G)|$ different copies of $G$ that one can get to by means of performing the feasible edge-replacement $e \to e' \in R_G$.

Observe that performing a feasible edge-replacement $e \to e' \in R_G$ in a copy $G_{\rho}$ of $G$ yields the copy of $G$ given by the permutation $\sigma 	\, \rho$, where we can choose any  $\sigma \in  \fer_G(e \to e')$.  In other words, we can model the application of a series of feasible edge-replacements by considering the composition of the corresponding permutations. A formal proof of this rather intuitive fact can be found in Lemma \ref{la:properties_FER} in the Appendix. It now makes sense to consider the group $\fer(G)$ generated by the permutations associated to all feasible edge-replacements, that is, by the set 
\[\mathcal{E}_G =  \bigcup_{e \to e'\in R_G}  \fer_G(e \to e'). \]
Thus,
\[\fer(G) = \left\langle \mathcal{E}_G\right\rangle.\]

Clearly, $\fer(G)$ acts on the set $\{ G_{\sigma} \;|\; \sigma \in S_X\} $ by means of $(\rho,  G_{\sigma}) \mapsto G_{\rho \sigma}$, where $\rho \in \fer(G)$ and $\sigma \in S_X$. Observe that this action represents what happens when a series of feasible edge-replacements, represented by $\rho$, is performed on a copy $G_{\sigma}$ of $G$: the result is the graph $G_{\rho \sigma}$. Hence, the property of being able to go from any copy $G_{\sigma}$ to any other $G_{\sigma'}$ by means of a chain of feasible edge replacements means that, for any $\sigma, \sigma' \in S_X$, there is a $\rho \in \fer(G)$ such that $\sigma' = \rho \sigma$, i.e. $\fer(G) = S_X$.  Recall also that, by Proposition \ref{prop:localUK1=local}, if  $G \cup t_0K_1$ is a local amoeba for some $t_0 >0$, then actually  $G \cup tK_1$ is a local amoeba for every $t \ge t_0$. Thus, we can define now local and global amoebas by means of the group $\fer(G)$.

\begin{defi}\label{def:amoebas}
Let $G$ be a graph provided with a labeling $\lambda: V(G) \to X$ on its vertices. $G$ is called a \emph{local amoeba} if $\fer(G) = S_X$. 
On the other hand, $G$ is called \emph{global amoeba} if there is an integer $t \ge 1$ such that $G \cup tK_1$ is a local amoeba.
\end{defi}

We shall also note that  ${\rm Aut}(G)  \cong A_G = \{\sigma \in S_X \,|\, G_{\sigma}=G\} \leq \fer(G)$. In the next lemma, we discuss the connection between the feasible edge-replacements of a graph $G$ and those of its complementary graph $\overline{G}$, concluding that the corresponding associated groups are the same.

\begin{lemma}\label{la:complement}
Let $G$ be a graph provided with a labeling $\lambda: V(G) \to X$ on its vertices. Then $\fer(G) = \fer(\overline{G})$.
\end{lemma}

\begin{proof}
By definition, it is enough to show that, for every feasible edge-replacement $e \to e' \in R_G$ and $\sigma \in \fer_G(e \to e')$, there is a feasible edge-replacement in $\overline{G}$ represented by $\sigma$, and vice-versa. If $e \to e' = \emptyset \to \emptyset$, then clearly $\overline{G} \cong \overline{G_{\sigma}}$ for any $\sigma \in \fer_G(\emptyset \to \emptyset) = A_G$. Hence, take $kl \to rs \in R_G^*$, and $\sigma \in \fer_G(rs \to kl)$, and observe that 
\[\overline{G} \cong \overline{G_{\sigma}} = \overline{G-v_rv_s+v_kv_l} =  \overline{G}-v_kv_l+v_rv_s,\] 
which means that $\sigma \in S_{\overline{G}}(kl \to rs)$. The converse is analogous.
\end{proof}

As a consequence, we note the following.

\begin{proposition}\label{prop:basic}
A graph $G$ is a local amoeba if and only if its complementary graph $\overline{G}$ is a local amoeba. \qed
\end{proposition}

To continue, we need some terminology.  Given a subgroup $\Gamma \leq S_X$ and $k\in X$, we denote by $\Gamma k$ the orbit of $k$ by means of the canonical action of $\Gamma$ on $X$, i.e. $$\Gamma k=\{\sigma(k)\;|\; \sigma \in \Gamma \}.$$  If, for some $Y\subseteq X$ and some $k\in Y$, we have that $\Gamma k=Y$, then we say that  $\Gamma$ \emph{acts transitively} on $Y$. We denote by ${\rm Stab}_{\Gamma}(Y)$, the \emph{stabilizer} of $\Gamma$ on $Y$, that is
 \[{\rm Stab}_{\Gamma}(Y) = \{\sigma \in \Gamma \;|\; \sigma(y) \in Y \mbox{ for all } y \in Y\}.\]
For an $x \in X$, we write  ${\rm Stab}_{\Gamma}(x)$ instead of ${\rm Stab}_{\Gamma}(\{x\})$. Moreover, for $\sigma \in S_X$ and $Y \subseteq X$, we denote with $\sigma\restrict{Y}$ the \emph{restriction} of $\sigma$ to $Y$, i.e. the mapping $\sigma\restrict{Y}: Y \to X$ defined by $\sigma\restrict{Y}(y) = \sigma(y)$ for $y \in Y$. Given sets $A, B, X, Y$ and two mappings $\sigma: A \to X$, and $\tau: B \to Y$ such that $\sigma(i) = \tau(i)$ for every $i \in A\cap B$, we denote with $\sigma \cup \tau$ the \emph{union} of $\sigma$ and $\tau$, that is, the function $\sigma \cup \tau: A \cup B \to X \cup Y$ such that
\[(\sigma \cup \tau) (i) = \left\{ \begin{array}{ll}
\sigma(i), &\mbox{if } i \in A\\
\tau(i), & \mbox{if } i \in B.
\end{array}\right.\]

According to Definition \ref{def:amoebas}, in order to determine whether a graph $G$ with a vertex labeling $\lambda: V(G) \to X$ is a local amoeba, we need to demonstrate that $\fer(G) = S_X$. Therefore, any way that enables us to generate the symmetric group $S_X$ using a small number of specific permutations would be advantageous. For example,  the following are  well known facts:
\begin{itemize}
\item  For any  $i\in[1,n]$ we have that $S_n= \left\langle (1\;2\;3 \ldots \; n), (i \; i+1)\right\rangle$.
\item The set $\{(i \; n) \; |\; i \in [n-1]\}$ generates $S_n$.
\item Let $X$ be a finite set, and let $k \in X$. Then, for any     subgroup $\Gamma \le {\rm Stab}_{S_X}(k)$ which is transitive on $X \setminus \{k\}$, we have that $S_X=\langle \Gamma \cup \{(i \; k)\}\rangle$, where $i$ is any element of $\in X \setminus \{k\}$.
\end{itemize}
Due to this, we can derive the following observation.
\begin{obs}\label{obs:daniel}
Let $G$ be a graph, and let $\lambda: V(G) \to X$ be a labeling on its vertices. In the following cases, $G$ is a local amoeba:
\begin{itemize}
\item If $X=[n]$,  $(1\;2\;3 \ldots \; n)\in \fer(G)$ and, for some  $i\in[1,n]$, also  $(i \; i+1)\in \fer (G)$.
\item If $X=[n]$,  and  $(i \; n)\in \fer (G)$ for all $i \in [n-1]$.
\item If there is a $k\in X$ such that ${\rm Stab}_{\fer(G)}(k)$ acts transitively on $X \setminus \{k\}$ and $(i\,k) \in \fer(G)$ for some $i \in X \setminus \{k\}$. 
\end{itemize}
\end{obs}

 Next, we will show how Definition \ref{def:amoebas} is very handy by completing the proof of items (iii) and (iv) of Example~\ref{ex:amoebas_basic}. We also present an example that will be used in Section \ref{sec:constr}. The different feasible edge-replacements performed in Example \ref{ex:PnandtK2} together with the corresponding permutations are illustrated Figure \ref{fig:basic examples}.

\begin{example}\label{ex:PnandtK2}
\mbox{}
\begin{enumerate}
\item[(i)] For $n \ge 1$, the path $P_n$ on $n$ vertices is a local and a global amoeba. 
\item[(ii)] For $n$ even, the graph $\frac{n}{2}K_2$ is a global amoeba but not a local amoeba.
\item[(iii)] For $n\geq 3$, the graph $C(n,1)$ obtained from a cycle on $n$ vertices by attaching a pendant vertex, is both, a local amoeba and a global amoeba.
\end{enumerate}
\end{example}

\begin{proof}
(i) For $n \le 2$, we are done by Proposition \ref{prop:neutralFER}. So we assume $n \ge 3$. Let $P = v_1v_2...v_n$ be a path on $n \geq 3$ vertices, with the labeling $\lambda:V(P) \to [n]$, $\lambda(v_i) = i$. Consider the feasible edge-replacements $n-1\;n \to 1\;n$ and $2\;3 \to 1\;3$, which give the permutations $(1\;2\;3 \ldots \; n)$ and $(1 \; 2)$. Since $ \left\langle (1\;2\;3 \ldots \; n), (1 \; 2)\right\rangle = S_n$, $P$ is a local amoeba. Let now $P' = P \cup \{v_{n+1}\}$, a graph isomorphic to $P_n \cup K_1$, with the usual labeling on its vertices. Clearly, $(1\;2\;3 \ldots \; n), (1 \; 2) \in {\rm Stab}_{\fer(P')}(n+1)$, which acts transitively on $[n] =[n+1]\setminus \{n+1\}$.  Moreover, we have now the feasible edge-replacement $n-1 \; n \to n-1 \; n+1$ that gives the permutation $(n \; n+1) \in \fer(P')$. Thus, By Observation~\ref{obs:daniel} we have that $P'$ is  a local amoeba, and thus $P$ is a global amoeba.\\
(ii) Let  $n$ be even. To show that $\frac{n}{2}K_2$ is a global amoeba, we will demonstrate that $\frac{n}{2}K_2 \cup K_1$ is a local amoeba. Let $n$ be even, and let $G$ be a graph isomorphic to $\frac{n}{2}K_2 \cup K_1$. Let $V(G) = \{v_i \;|\; i \in [n+1]\}$ and $E(G) = \{v_iv_{i+1}\;|\; i \in [n], i \mbox{ odd} \}$. As usual, let $\lambda(v_i) = i$ be a labeling on its vertices, $i \in [n+1]$. Clearly, the permutations 
$(i \; i+1)$ and $(i \; n-1)(i+1 \; n)$ are contained in $A_G \le \fer(G)$, for any odd $i \in [n]$. 
 Moreover, the edge-replacement $n-1 \; n \to n-1 \; n+1$ is certainly feasible, and so $(n \; n+1) \in \fer(G)$. Now we can see that, for any $i \in [n]$, where $i$ is odd, we have:
\[(i \; n-1)(i+1 \; n) \circ (n \; n+1) \circ (i \; n-1)(i+1 \; n) = (i+1 \; n+1) \in \fer(G).\]
Thus, we can also perform the following computation with elements from $\fer(G)$:
\[(i \; i+1) \circ (i+1 \; n+1) \circ (i \; i+1) = (i \; n+1) \in \fer(G).\]
It follows that $ (j \; n+1) \in \fer(G)$ for every $j \in [n]$. This set of permutations is known to generate $S_{n+1}$. Hence, $G$ is a local amoeba and we are done.\\
(iii) Let $C(n,1)$ be defined on the vertex set $\{v_1, v_2, \ldots, v_{n+1}\}$ with edges $\{v_iv_{i+1} \;|\; i \in \mathbb{Z}_{n}\} \cup \{ v_1v_{n+1}\}$ and let it have the labeling $\lambda:V(C(n,1)) \to [n+1]$ with $\lambda(v_i) = i$, for $i \in [n+1]$. Then $1\; (n+1) \to n \; (n+1)$ and $(n-1)\;n \to (n-1) \; (n+1)$ are feasible edge-replacements that give the permutations $(1\;2\;3 \ldots \; n)$ and $(n \; n+1)$ that generate $S_{n+1}$. Thus, $C(n,1)$ is a local amoeba and, by Proposition \ref{prop:la_degrees}, it is also a global amoeba.
\end{proof}
The graphs $P_n$, $\frac{n}{2}K_2$ and $C(n,1)$ are still quite basic examples to test to which amoeba family they belong. However, a simple application of the definition is not that easy to use when we consider a little more complicated graphs (not necessarily large ones). For example, the graph depicted below in Figure \ref{fig:G9} can be shown to be a global amoeba and that it is not a local amoeba. We will prove this later on by means of more sophisticated tools that we will develop (see Example \ref{ex:Gn-En}). Meanwhile, we leave to the reader to play around with this example.
\begin{figure}
\begin{center}
	\includegraphics[scale=0.43]{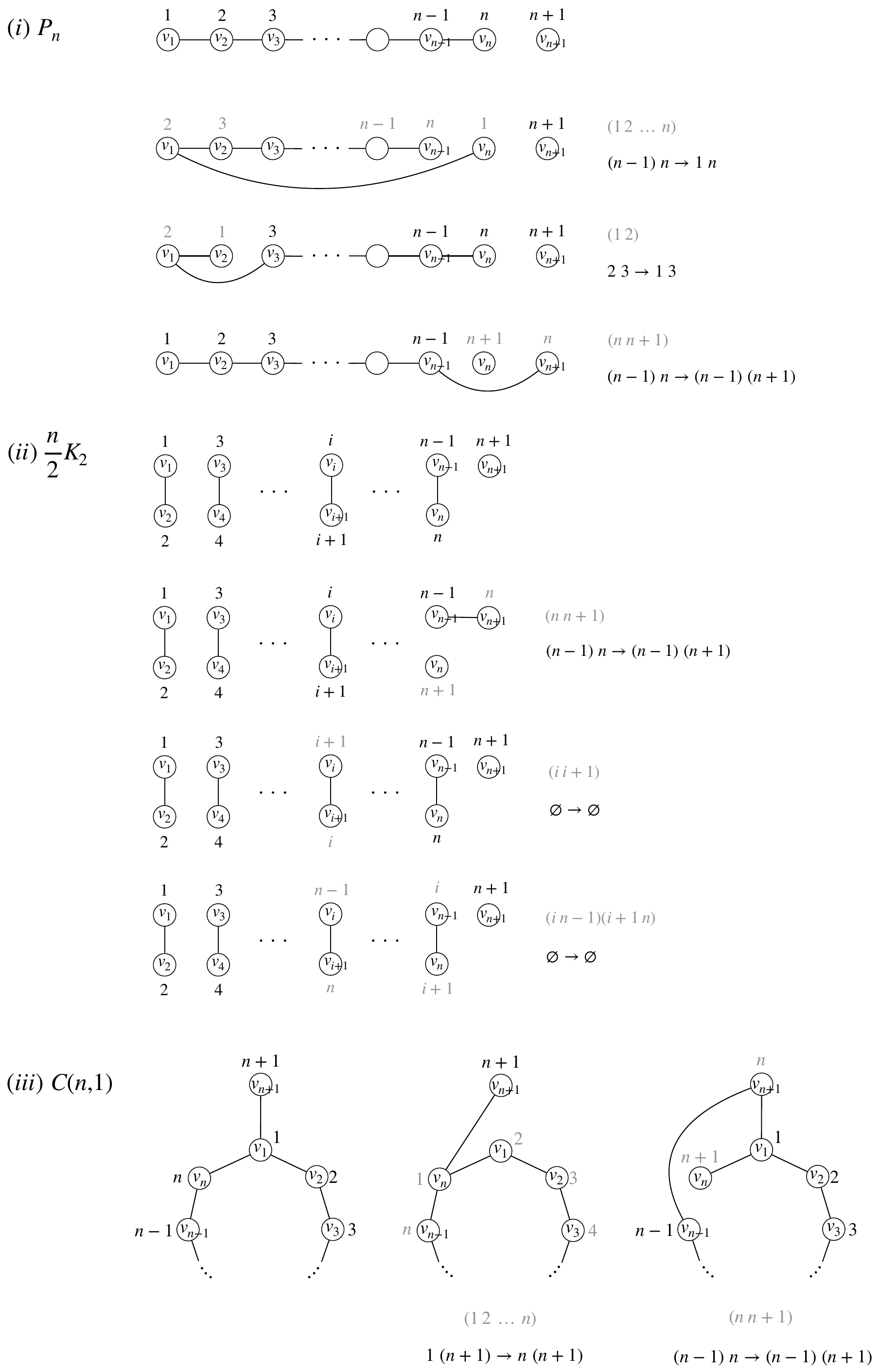}
	\caption{\small $P_n$ is local and global amoeba; $\frac{n}{2}K_2$ is global but not local amoeba; $C(n,1)$ is global and local amoeba.}
		\label{fig:basic examples}
	\end{center}
\end{figure}

\begin{figure}[H]
\begin{center}
	\includegraphics[scale=0.6]{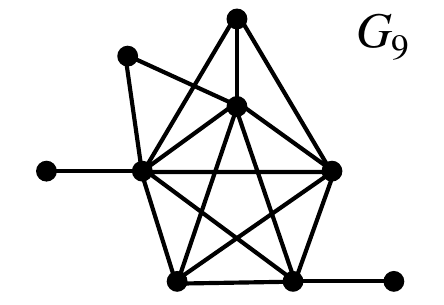}
	\caption{\small Example of a graph that is a global amoeba but not a local amoeba.} 
		\label{fig:G9}
	\end{center}
\end{figure}

We will finish this section with an interesting example that will play an important role in Section \ref{sec:e(G)X(G)w(G)}. Let $H_n$ be the graph of order $n \ge 2$ with $V(H_n) = A \cup B$ such that, taking $q=\lfloor \frac{n}{2} \rfloor$, $A = \{v_1, v_2, \ldots, v_{q}\}$ and $B = \{v_{q+1}, v_{q+2}, \ldots, v_{q+{\lceil \frac{n}{2} \rceil}}\}$, where $B$ is a clique, $A$ is an independent set and adjacencies between $A$ and $B$ are given by $v_i v_{q+j}\in E(H_n)$ if and only if $j \le i$, where $1\le i\le q$ and $1\le j\le \lceil \frac{n}{2} \rceil$ (see Figure \ref{fig:Hn}). Observe that $\deg(v_i) = i$ for all $1 \le i \le q$ and $\deg(v_{q+j}) = n - j$ for all $1 \le j \le \lceil \frac{n}{2} \rceil$. Hence, we have one vertex from each degree between $1$ and $n-1$ with exception of vertices $v_q$ and $v_n$ that have both degree $\lfloor \frac{n}{2} \rfloor$.

\begin{figure}[H]
\begin{center}
	\includegraphics[scale=0.6]{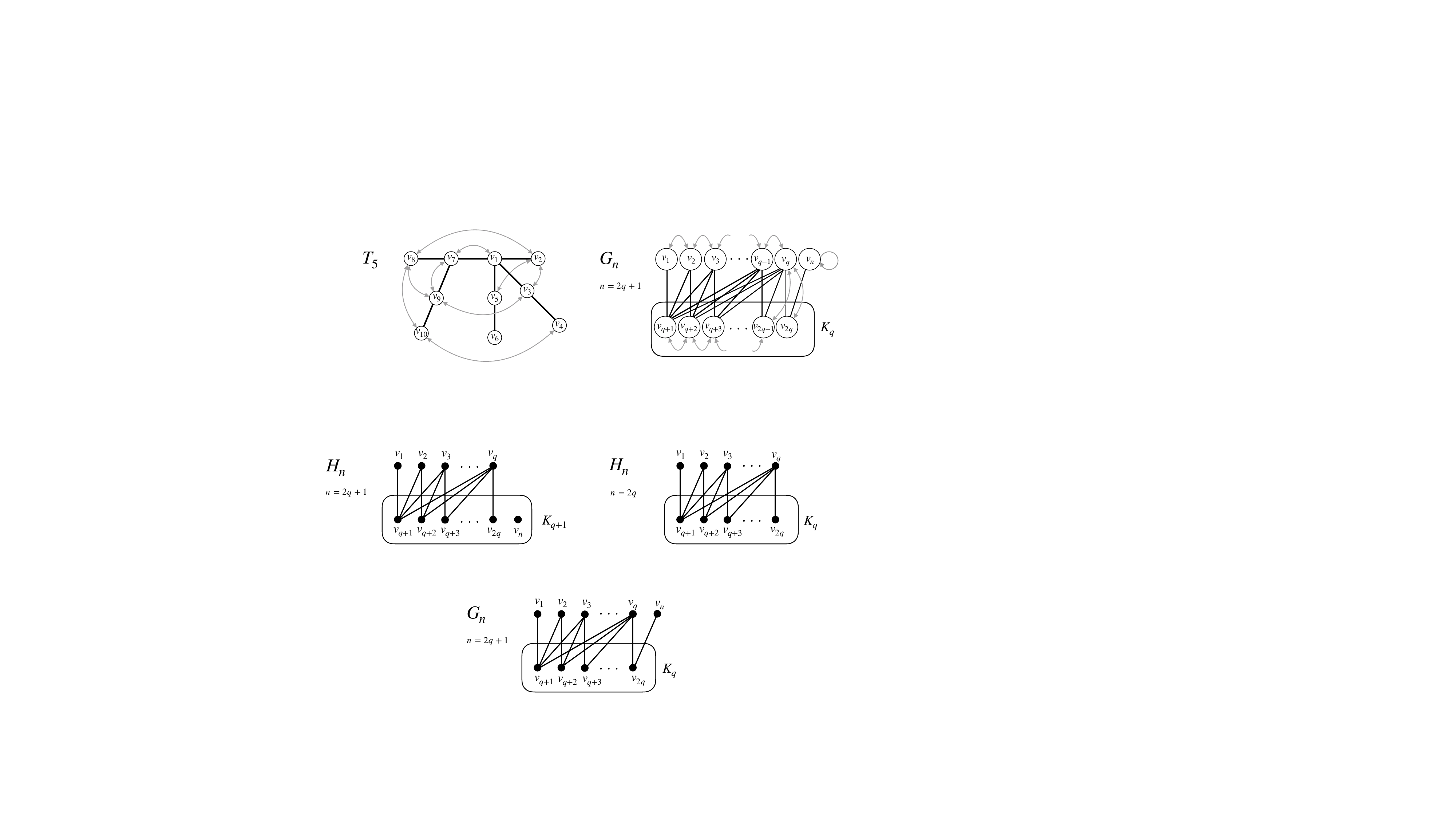}
	\caption{\small The graph $H_n$.} 
		\label{fig:Hn}
	\end{center}
\end{figure}

\begin{example}\label{ex:Hn}
$H_n$ is both a local and a global amoeba.
\end{example}

\begin{proof}
In \cite{BeCh}, it is shown that $H_n$ is the only graph of order $n$ having $\{\deg(v) \;|\; v \in V(G)\} = [n-1]$. Observe that $H_n$ can be defined recursively in the following way. By definition, $H_2 \cong K_2$, which is the same as $\overline{H_1 \cup K_1}$. Now we will show that $H_n \cong \overline{H_{n-1} \cup K_1}$ for $n \ge 3$. Indeed, this comes from the fact that the set of all degree values in $H_{n-1} \cup K_1$ is $[n-2] \cup \{0\}$ yielding that the set of all degree values in $\overline{H_{n-1} \cup K_1}$ is $\{n-1 - d \; | 0 \le d \le n-2\} = [n-1]$. Hence, $H_n \cong \overline{H_{n-1} \cup K_1}$, for each $n \ge 2$.
To show that $H_n$ is a global and a local amoeba, we proceed again by induction on $n$. $H_2 \cong K_2$  is clearly both a local and a global amoeba. Now we assume that $H_n$ is a local and a global amoeba for some $n \ge 2$. By Proposition \ref{prop:localUK1=local}, it follows that $H_n \cup K_1$ is a local amoeba. Then also $H_{n+1} \cong \overline{H_n \cup K_1}$ is a local amoeba (Proposition \ref{prop:basic}), and because it has minimum degree $1$, it is also a global amoeba (Proposition \ref{prop:localUK1=local}).
\end{proof}
\section{Main result}\label{sec:main}

\subsection{Characterizations of global amoebas}

The following theorem provides equivalent statements for the definition of a global amoeba. An important consequence of Theorem \ref{thm:eq} is that it shows that a graph $G$ is a global amoeba if and only if $G \cup K_1$ is a local amoeba, giving an answer to Question  \ref{q:t=1}.

\begin{theorem}\label{thm:eq}
Let $G$ be a non-empty graph. Let $\lambda: V(G) \to X$ be a labeling on its vertices, and let $\Gamma = \fer(G)$. For each $x \in X$, let $v_x = \lambda^{-1}(x)$. The following statements are equivalent:
\begin{enumerate}
\item[(i)] $G$ is a global amoeba.
\item[(ii)] $G\cup K_1$ is a local amoeba.
\item[(iii)] For each $x \in X$, there is a $y \in \Gamma x$ such that $\deg_G(v_y) = 1$.
\item[(iv)] For each $x \in X$ such that $\deg_G(v_x) \ge 2$, there is a $\sigma\in \Gamma$ such that $\deg_G(v_{\sigma(x)}) = \deg_G(v_x) -1$.
\end{enumerate}
\end{theorem}

Theorem \ref{thm:eq} gives us interesting information about amoebas and useful tools  to determine if a graph is a global amoeba. Before giving its proof, we will discuss some of its consequences and give some examples of how it can be applied.
 Recall that, by Proposition~\ref{prop:la_degrees}, every global amoeba $G$ has minimum degree $0$ or $1$. On the other hand, a local amoeba can have minimum degree arbitrarily large (for example $K_n$), and by Proposition~\ref{prop:localUK1=local}, a local amoeba with minimum degree $0$ or $1$ is a global amoeba, too. However, the converse of the latter is only true when the graph has isolated vertices, as can be seen with the graph $G_9$ depicted in Figure \ref{fig:G9}.

\begin{corollary}\label{co:deg=0}
Let $G$ be a graph with minimum degree $\delta = 0$. Then  $G$ is a local amoeba if and only if $G$ is a global amoeba.
\end{corollary}
\begin{proof}
If $G$ is a local amoeba with $\delta = 0$, then $G \cup K_1$ is a local amoeba by Proposition \ref{prop:localUK1=local}. Hence, it follows with Theorem \ref{thm:eq} that $G$ is a global amoeba. For the only-if part, let $G$ be a global amoeba with an isolated vertex $v$. By definition, $G-v$ is a global amoeba and thus, by Theorem \ref{thm:eq}, $G$ is a local amoeba.
\end{proof}

To prove the usefulness of Theorem \ref{thm:eq}, let us consider two examples. The graph $G_n$ we describe next is a variation of the graph $H_n$ in which we attach just a pendant vertex. It turns out that, while the property of being a global amoeba is conserved, it is no longer a local amoeba. Let $G_n$ be the graph of odd order $n = 2q+1$, for $q \ge 4$, obtained from $H_{n-1}$  (see Example \ref{ex:Hn}) by attaching a pendant vertex $v_{2q+1}$ to vertex $v_{2q}$ (see Figure \ref{fig:Gn_En}). Observe that the graph $G_9$ is precisely the graph of Figure \ref{fig:G9}. The second example is the so-called \emph{half-graph}, as was named by Erd\H{o}s and Hajnal \cite{Erdos}. The half-graph is famous for being an example that shows that  Szemer\'edi's Regularity Lemma cannot be strengthened to a regular partition \cite{CoFo}. The half-graph, which we denote here with $E_n$ is closely related to $H_n$, as can be seen in the following description: let $E_n$ be the bipartite graph of even order $n = 2q \ge 2$ with $V(E_n) = A \cup B$ such that $A = \{v_1, v_2, \ldots, v_{q}\}$ and $B = \{v_{q+1}, v_{q+2}, \ldots, v_{n}\}$, and the edges between $A$ and $B$ are given by $v_i v_{q+j}\in E(H_n)$ if and only if $j \le i$, where $1\le i, j\le q$.
\begin{example}\label{ex:Gn-En} 
The following graphs are global amoebas but not local amoebas:
\begin{enumerate}
    \item[(i)] The graph $G_n$, for odd $n \ge 9$.
    \item[(ii)] The graph $E_n$, for even $n \ge 6$.
\end{enumerate}
\end{example}
\begin{proof}
(i) Let $n = 2q+1$, where $q \ge 4$. We first note that the degrees of the vertices in $G_n$ are the following: $\deg(v_j)=j$ for $1\leq j \leq q$, $\deg(v_{q+j})=2q-j$ for $1\leq j \leq q-1$, $\deg(v_{2q})=q+1$, and $\deg(v_n)=1$. By Theorem \ref{thm:eq}(iv), to prove that $G_n$ is a global amoeba, it is enough to show that, for each $x \in [n]$ such that $\deg(v_x) \ge 2$, there is a $\sigma\in \fer(G_n)$ such that $\deg(v_{\sigma(x)}) = \deg(v_x) -1$.  For  $2\leq j \leq q$, we can see that the feasible edge replacement $j \; (q+j) \to (j-1) \; (q+j)$ implies that $(j-1 \; j)\in \fer(G_n)$. Also, for  $1\leq j \leq q-2$, the feasible edge replacement $j \; (q+j)\to j \; (q+j+1)$ implies that $(q+j \; q+j+1)\in \fer(G_n)$. Finally, the  feasible edge replacements $(q-1) \; (2q-1) \to (q-1) \; q$ and $2q \; n \to q \;n$ imply, respectively,  that $(q \; 2q-1)$ and $(q \; 2q)$ belong to $\fer(G_n)$. Therefore, every vertex with degree at least $2$ can decrease its degree in one unit, as desired. To see that $G_n$ is not a local amoeba, observe that $\fer(G_n)$ contains two orbits, $[2q]$ and $\{n\}$, because (having $n \ge 9$) there is no feasible edge-replacement that can change the role of $v_n$ (see Figure \ref{fig:Gn_En} for a  visual representation).\\
(ii) Let $n = 2q$, where $q \ge 3$. Using feasible edge-replacements similar as with $G_n$, we obtain permutations $(j \; j+1)$, and $(q+j \; q+j+1)$,  for $1 \le j \le q-1$. Moreover, the graph has exactly one automorphism represented by the permutation $(1 \; n)(2 \; n-1) \ldots (q \; q+1)$. We obtain therefore one single orbit, and, as $E_n$ has vertices of degree $1$, we conclude with Theorem \ref{thm:eq}(iii) that $E_n$ is a global amoeba. However, besides the automorphism mentioned above that interchanges the whole sets $A$ and $B$, there is no other way of permuting labels between vertices from $A$ and vertices from $B$, and thus $\fer(E_n) \not\cong S_n$ (see Figure \ref{fig:Gn_En}).
\end{proof}

\begin{figure}[H]
\begin{center}
	\includegraphics[scale=0.6]{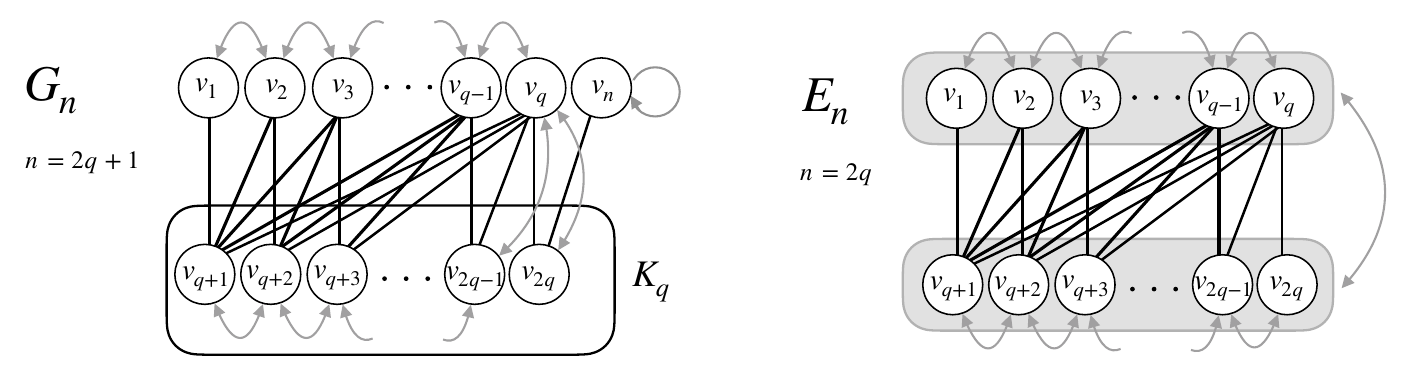}
	\caption{\small The graphs $G_n$ and $E_n$ and their orbits.} 
		\label{fig:Gn_En}
	\end{center}
\end{figure}

These examples are just a glimpse of the power of Theorem \ref{thm:eq}. In Section \ref{sec:constr}, it will prove to be a very important tool.

\subsection{Proof of Theorem \ref{thm:eq}}

To prove Theorem \ref{thm:eq}, we will need a lemma that deals with the situation of edge replacements where no isolated vertex is involved or, on the contrary, when isolated vertices are involved. The lemma is quite technical and graph theoretically it is easy to handle, but the proof of Theorem \ref{thm:eq} will require us to work with the group action, and so this language is necessary. Due to these reasons, we chose to put it in the Appendix Section as Lemma \ref{la:prop_GUtK_1}, and here we will just describe informally what it is about. First of all, let us remark that, to have a better control of the situation, we will consider a graph $G$ without isolates and we will add to it $t$ isolated vertices. Now we let $U$ be a set of $t$ vertices disjoint from $V(G)$,  for some $t \ge 1$, and $G^* = G \cup U \cong G \cup t K_1$. We use the labelings $\lambda: V(G) \to X$ and $\lambda: V(G^*) \to X \cup Y$ such that $\lambda^* \restrict{V(G)} = \lambda$, and let $\Gamma = \fer(G)$ and $\Gamma^* = \fer(G^*)$. 

Lemma \ref{la:prop_GUtK_1} has two statements. Item (i) deals with the case of a feasible edge replacement $e \to e' \in R_G$ that does not involve isolated vertices. In such a situation, we have that any permutation $\sigma \in \Gamma^*(e \to e')$ permutes labels among $X$ and maybe also among $Y$ but it does not interchange labels between the two sets. That happens, in particular, when $\sigma \in A_{G^*}$. Moreover, the restriction $\sigma  \restrict{X}$ of
every such permutation $\sigma$ corresponds to a permutation in $\Gamma(e \to e')$
and $A_G = \{\varphi\restrict{X} \;|\; \varphi \in A_{G^*}\}$.

In the second statement, we have a permutation in $\sigma \in \mathcal{E}_{G^*}$ that interchanges labels between $X$ and $Y$, meaning that there are vertices from $G$ as well as isolated vertices involved. This can only happen in one of the following situations: either we are deleting a pending edge and inserting a new edge making it pend on the same vertex but joining now one of the isolated vertices, or we have an isolated edge (two adjacent vertices of degree $1$) and we are replacing it by an edge joining two of the isolated vertices outside $G$. Item (ii) of Lemma \ref{la:prop_GUtK_1} describes then how $\sigma$ can be expressed in those two cases: it can be written as a concatenation $\sigma =  \varphi \circ (r\; k)$ or $\sigma =  \varphi \circ (r\; k)(s \; l)$, where $r,s \in X$, $k, l \in Y$, and $\varphi \in \mathcal{E}_{G^*}  \cap {\rm Stab}_{\Gamma^*}(X)$. In the second case, one can even deduce that $\varphi \in A_{G^*}$.\\ 

Now we are ready to present the proof of Theorem \ref{thm:eq}.

\begin{proof}[Proof of Theorem \ref{thm:eq}]
We will show (i) $\Rightarrow$  (iii) $\Rightarrow$  (ii) $\Rightarrow$  (i), and (iii) $\Leftrightarrow$ (iv).\\

To see (i) $\Rightarrow$  (iii), let $G$ be a global amoeba. We will first handle the case that $G$ has no isolates and then the case with isolates will easily follow.

\noindent
\emph{Case 1: Suppose that $G$ has no isolates.} \\
By definition of global amoeba, there is a $t \ge 1$ such that $G^* = G \cup tK_1$ is a local amoeba, that is,  $\fer(G^*) \cong S_{|X|+t}$.  Let $\lambda^* : V(G^*) \to X \cup Y$ a labeling on $V(G^*)$ such that $\lambda^*\restrict{V(G)} = \lambda$, and set $v_y = {\lambda^*}^{-1}(y)$, for each $y \in Y$. Let $\Gamma^* = \fer(G^*)$, and let  $x \in X$. Take a permutation $\tau \in \Gamma^*$ with $\tau(x) = m$  for some $m  \in Y$. We know that $\tau = \sigma_q \sigma_{q-1} \cdots \sigma_1$ where $\sigma_1, \cdots, \sigma_q \in \mathcal{E}_{G^*}$. Set   $\tau_i = \sigma_i \ldots \sigma_1$, $1 \le i \le q$. If $q = 1$, we have that $\tau \in \mathcal{E}_{G^*}$, and the facts that $x \in X$ and $\tau(x) \in Y$ imply that, after an edge-replacement, the vertex $v_x$ gets degree $0$, meaning that it originally had degree $1$. As  $x \in \Gamma x$, in this case  we are done. In case $q \ge 2$, we will show that $\tau$ can be chosen having the following properties:
\begin{enumerate}
\item[(a)] $\tau_i(x) \in X$ for all $1 \le i \le q-1$.
\item[(b)] $\tau_i(x) \neq \tau_j(x)$ for any pair $i,j$ with $1 \le i < j \le q$.
\item[(c)] $\sigma_i \in {\rm Stab}_{\Gamma^*}(X)$, for $1 \le i \le q-1$.
\end{enumerate}
If $\tau_i(x) \in Y$ for some $i < q$, then we can take $\tau_i$ instead of $\tau$. Hence, we may assume property (a). If $\tau_i(x) = \tau_j(x)$ for some pair $1 \le i < j \le q$, then we can take $\tau' = \sigma_q \sigma_{q-1}\ldots \sigma_{j+1} \sigma_i \sigma_{i-1} \ldots \sigma_1$ instead of $\tau$. Hence, we may assume (b). Suppose $\sigma_j \notin {\rm Stab}_{\Gamma^*}(X)$ for some $j \in \{1, 2, \ldots, q-1\}$. Choose $j$ such that it is minimum with this property. Then $\sigma_j(r) \in Y$ for some $r \in X$, say $\sigma_j(r) = k$. By (a), we have $r, k \neq \tau_{j-1}(x)$. By Lemma \ref{la:prop_GUtK_1}, either there is a $\varphi \in \mathcal{E}_{G^*} \cap {\rm Stab}_{\Gamma^*}(X)$ such that $\sigma_j = \varphi \circ (r \; k)$, or there is a $\varphi \in A_{G^*}$ such that $\sigma_j = \varphi \circ (r \; k) (s \; l)$ for certain $s \in X \setminus \{r\}$, $l \in Y \setminus \{k\}$. Suppose we have the first case.  Since $\varphi \in \mathcal{E}_{G^*}$, and $\tau_j(x) = \sigma_j(\tau_{j-1}(x)) = (\varphi \circ (r \; k) )(\tau_{j-1}(x)) = \varphi(\tau_{j-1}(x))$, we can replace $\tau$ by $\tau' = \sigma'_q \ldots \sigma'_1$, with $\sigma'_i = \sigma_i$ for $1 \le i \le q$, $i \neq j$, and $\sigma'_j = \varphi$. Suppose now  that $\sigma_j = \varphi \circ (r \; k)(s \; l)$  for certain $s \in [n]\setminus \{r\}$, $l \in Y \setminus \{k\}$, where $\varphi \in A_{G^*}$. Observe that, by item (i) of Lemma \ref{la:prop_GUtK_1}, $\varphi \in \mathcal{E}_{G^*} \cap {\rm Stab}_{\Gamma^*}(X)$. Then $\sigma_j(s) = \varphi \circ (r \; k)(s \; l) = \varphi(l) \in Y$. Thus, by (a), we know that $s \neq \tau_{j-1}(x)$.  Hence, we can proceed completely analogous  to the previous case replacing $\sigma'_j$ by $\varphi$.
Thus, we can assume that $\sigma_i \in  {\rm Stab}_{\Gamma^*}(X)$, for $1 \le i \le q-1$ and property (c) is satisfied. \\
Set $y = \tau_{q-1}(x)$. Now, since $\sigma_q \in \mathcal{E}_{G^*}$, $y \in X$,  and
\[\sigma_q(y) = \sigma_q(\tau_{q-1}(x)) =\tau_q(x) = \tau(x) = m \in Y,\] 
we have, in view of Observation \ref{obs:trivialDEG}, that $\deg_{G^*}(v_y) =1$.
Finally, we will show that $y \in \Gamma x$. To this aim, we 
will use the fact that $\sigma_i \in {\rm Stab}_{\Gamma^*}(X)$ for each $1 \le i \le q-1$ and so ${\sigma_i}\restrict{X} \in \Gamma$ by Lemma~\ref{la:prop_GUtK_1}~(i). Then ${\tau_{q-1}}\restrict{X} = (\sigma_{q-1} \ldots \sigma_1)\restrict{X} \in \Gamma$, and we have
\[ {\tau_{q-1}}\restrict{X} (x) = \tau_{q-1}(x) = y,\]
implying that $y \in \Gamma x$. Since  $\deg_G(v_y) = \deg_{G^*}(v_y) =1$, we have finished.\\

\noindent
\emph{Case 2: Suppose that $G$ has a non-empty set $U$ of isolated vertices.}\\
Observe that, by definition and in view of Proposition \ref{prop:localUK1=local}, $G - U$ is global amoeba if and only if $G$ is a global amoeba. Hence, if $G$ is a global amoeba, it follows with Case 1 that $(G - U) \cup K_1$ is a local amoeba, and, applying Proposition \ref{prop:localUK1=local} as many times as necessary, we obtain that $G$ is a local amoeba and thus $\fer(G) = S_X$. As $G$ is non-empty, it has to have at least one vertex of degree $1$ by Proposition \ref{prop:la_degrees}, say $v_y$. Hence, we have that $y \in \fer(G)x = S_Xx = X$ for every $x \in X$.\\

To see (iii) $\Rightarrow$ (ii), let $G^* = G \cup K_1$ and let $\lambda^*:V(G^*) \to X \cup \{y\}$ a labeling such that $\lambda^*\restrict{V(G)} = \lambda$. Let $\Gamma^* = \fer(G^*)$ and $X' = \{x \in X \;|\; \deg_G(v_x) = 1\}$.  Note that item (ii) is equivalent to say that $X = \bigcup_{x \in X'} \Gamma x$. For every $\sigma \in \Gamma$, consider the permutation $\sigma \cup \iota \in {\rm Stab}_{S_{G^*}}(n+1)$, where $\iota = {\rm id_{S_{\{n+1\}}}}$. Let $k \in \Gamma i$ for some $i \in X'$. Then there is a $\sigma \in \Gamma$ such that $\sigma(k) = i$.  Moreover $(i \; n+1) \in \Gamma^*$ for every $i \in X'$ because of the feasible edge-replacement $s_ii \to s_i(n+1) \in R_{G^*}$, where $v_{s_i}$ is the unique neighbor of $v_i$ in $G$. 
Then $(\sigma \cup \iota)^{-1}(i\; n+1)(\sigma \cup \iota) = (k \; n+1) \in S_{G^*}$. Since this holds for each $k \in \bigcup_{i=1}^p S_Gi = [n]$, $(k \; n+1) \in S_{G^*}$ for all $k \in [n]$ and we conclude that $S_{G^*} \cong S_{n+1}$. Hence, $G^* = G \cup K_1$ is a local amoeba.\\

Finally, the implication (ii) $\Rightarrow$ (i) is direct by the definition of global amoeba and Proposition \ref{prop:localUK1=local}, while (iii) $\Leftrightarrow$ (iv) is easy to see considering Observation \ref{obs:trivialDEG}.
\end{proof}


\section{Constructions}\label{sec:constr}

In this section, we give several constructions of global amoebas that arise from smaller ones. In particular, we will be able to construct large global amoebas, as well as global amoebas having any connected graph as one of its components and global amoeba-trees with arbitrarily large maximum degree. It is important to note that, by Proposition \ref{prop:localUK1=local} and Proposition \ref{prop:basic}, every construction given here for a global amoeba $G$ can also be used to construct a local amoeba when considering the graph $\overline{G \cup tK_1}$ for any $t \ge 1$, which is, in fact, connected. In order to simplify things, we will make use of certain abuse of notation when we deal with unions of graphs.

\begin{remark}\label{rem:groups}
Let $G = H \cup H'$ be the disjoint union of two graphs $H$ and $H'$. Let $\lambda: V(H) \to X$ and $\lambda ':V(H') \to X'$ be labelings, where $X \cap X' = \emptyset$. Let $\Gamma = \fer(H)$, and $\Gamma' = \fer(H')$. We will identify the groups $\{\sigma \cup \tau \;|\; \sigma \in \Gamma, \; \tau \in \Gamma'\}$ and $\Gamma \times \Gamma'$. Since we have that $\Gamma \times \Gamma' \le {\rm Stab}_{\fer(G)}(X) \le \fer(G)$, we have in particular that $\Gamma \cong \Gamma \times \langle {\rm id_{\Gamma'}} \rangle \le \fer(G)$ and that $\Gamma' \cong \langle {\rm id_{\Gamma}} \rangle\times  \Gamma' \le \fer(G)$. In an abuse of notation, we will say that $\Gamma$ and $\Gamma'$ are subgroups of $\fer(G)$.
\end{remark}

The formal argument of the existence of the subgroups mentioned in Remark \ref{rem:groups} can be checked in Lemma \ref{la:HUH'} of the Appendix.

\subsection{Unions and expansions}

As a consequence of Theorem~\ref{thm:eq}, we will show, in the first place, that  the vertex disjoint union of two global amoebas  is again a global amoeba.

\begin{proposition}\label{prop:HUH'isGA}
Let $H$ and $H'$ be two vertex-disjoint global amoebas. Then $G = H \cup H'$ is a global amoeba, too.
\end{proposition}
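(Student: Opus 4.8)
The plan is to verify the degree--orbit characterization of global amoebas from Theorem~\ref{thm:eq}~(ii): I will show that for every $x \in V(G)$ there is a $y \in S_G x$ with $\deg_G(v_y) = 1$. The whole argument rests on the fact, supplied by Lemma~\ref{la:HUH'} together with the identification of Remark~\ref{rem:groups}, that both $S_H$ and $S_{H'}$ embed as subgroups of $S_G$ which fix, respectively, the index set of $H$ and that of $H'$ setwise (they lie in ${\rm Stab}_{S_G}([m])$). This immediately gives the two ingredients I need: orbit containment, $S_H x \subseteq S_G x$ for a vertex $v_x$ of $H$ (and symmetrically for $H'$), and degree preservation, $\deg_G(v_y) = \deg_H(v_y)$ whenever $v_y \in V(H)$, since $H$ and $H'$ are unions of connected components of $G$ and no edge of $G$ joins $H$ to $H'$.

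First I would fix the labeling $V(H) = \{v_1, \ldots, v_m\}$ and $V(H') = \{v_{m+1}, \ldots, v_n\}$ so that the splitting of $[n]$ into $[m]$ and $[m+1,n]$ matches the two components. Then I take an arbitrary $x \in [n]$ and split into two symmetric cases according to whether $v_x$ lies in $H$ or in $H'$. In the case $x \in [m]$, since $H$ is a global amoeba, Theorem~\ref{thm:eq}~(ii) applied to $H$ produces a $y \in S_H x$ with $\deg_H(v_y) = 1$. By orbit containment $y \in S_G x$, and by degree preservation $\deg_G(v_y) = \deg_H(v_y) = 1$; this is exactly what item (ii) of Theorem~\ref{thm:eq} demands for $x$. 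The case $x \in [m+1,n]$ is handled identically using that $H'$ is a global amoeba. Having verified condition (ii) for every $x \in [n]$, Theorem~\ref{thm:eq} lets me conclude that $G$ is a global amoeba.

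I do not expect a genuine obstacle here; the proposition is essentially a transfer result, and the substantive work was already carried out in Lemma~\ref{la:HUH'} and Theorem~\ref{thm:eq}. The only points requiring a little care are the justification that $S_H$ really sits inside $S_G$ as a setwise stabilizer of $[m]$ (so that orbits and degrees transport cleanly from the component to the whole graph), and the implicit observation that a global amoeba, having a full degree set down to $1$ by Proposition~\ref{prop:ga_degrees}, indeed possesses vertices of degree exactly $1$, so that the conclusion of Theorem~\ref{thm:eq}~(ii) for $H$ is meaningful. Alternatively, one could run the same argument through Corollary~\ref{rem:DownDegrees}, lowering the degree of each vertex one unit at a time within its own component; this routes around mentioning degree-$1$ vertices explicitly but uses the same embeddings $S_H, S_{H'} \le S_G$ and the same degree-inheritance observation.
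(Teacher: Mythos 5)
Your proposal is correct and follows essentially the same route as the paper: both verify condition (ii) of Theorem~\ref{thm:eq} for $G$ by applying that theorem to $H$ and $H'$ and then transporting the degree-one orbits into $S_G$ via the embedding $S_H \times S_{H'} \le S_G$ from Lemma~\ref{la:HUH'} and Remark~\ref{rem:groups}. The only cosmetic difference is that the paper states the orbit condition as a covering of $[n]$ by the orbits of degree-one indices, while you phrase it pointwise for each $x \in [n]$; these are equivalent formulations of the same argument.
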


\begin{proof}
Let  $\lambda: V(H) \to X$ and  $\lambda': V(H') \to X'$ be  labelings on the vertices of  $H$ and $H'$, respectively, where $X\cap X'= \emptyset$.   
Let $I_X$ and $I_{X'}$ be the sets of all labels of the vertices of degree one in $H$ and $H'$, respectively. Let $\Gamma =\fer (H)$ and $\Gamma ' = \fer (H')$.
Since $H$ and $H'$ are global amoebas, we have, by the equivalence of items (i) and (iii) of Theorem \ref{thm:eq}, that 
\[\bigcup_{i \in I_X} \Gamma i = X \;\mbox{ and }\; \bigcup_{i \in I_{X'}} \Gamma' i = X'.\]
Hence, with $\Gamma \times \Gamma' \le \fer (G)$ (see  Remark \ref{rem:groups}), and $I=I_X \cup I_{X'}$ we obtain
\[X\cup X' = \bigcup_{i \in I} (\Gamma \times \Gamma')i \subseteq \bigcup_{i \in I} \fer (G) i,\]
from which, again by the equivalence of items (i) and (iii) of Theorem \ref{thm:eq}, we obtain that $G$ is a global amoeba.
\end{proof}

Observe that the converse statement of Proposition \ref{prop:HUH'isGA} is not valid. For example, let $H= P_k$ and $H' = C_k$, for $k \ge 3$; the graph $G =H\cup H'$ is a global amoeba as we will show in item (ii) of Example \ref{ex:GnotL}. However, $H = C_k$ is not a global amoeba (by Proposition \ref{prop:la_degrees}). We remark also at this point that there is no corresponding result to Proposition~\ref{prop:HUH'isGA} for local amoebas, since the union of two local amoebas is not necessarily again a local amoeba.   For instance, take $H=H'=P_k$, for $k \ge 2$, then, while $P_k$  is a local amoeba (see Example \ref{ex:PnandtK2}), the graph $G =H\cup H'$ is not. In item (i) of  Example~\ref{ex:GnotL} we prove something stronger. Before that, we need to prove the following proposition, which establishes that the union of several vertex-disjoint global amoebas with the same number of edges is again a global amoeba but never a local amoeba.

\begin{proposition}\label{prop:GAunion-equal-edges}
Given an integer $k \ge 2$, let $G_1, G_2, \ldots, G_k$ be connected and pairwise vertex-disjoint global amoebas such that $|E(G_i)| = |E(G_j)| \ge 1$, for $1 \le i, j \le k$. Then $G = \bigcup_{i = 1}^{k} G_i$ is a global amoeba but not a local amoeba.
\end{proposition}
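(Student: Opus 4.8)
The plan is to prove the two assertions separately. That $G$ is a global amoeba follows immediately by induction on $k$ from Proposition~\ref{prop:HUH'isGA}: the case $k=2$ is exactly that proposition, and for $k>2$ one writes $G = \left(\bigcup_{i=1}^{k-1}G_i\right)\cup G_k$, applies the induction hypothesis to the first factor, and invokes Proposition~\ref{prop:HUH'isGA} once more. This half uses neither connectivity nor the equal-size hypothesis $e(G_i)=e(G_j)$; that hypothesis is precisely what drives the second assertion.

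For the claim that $G$ is not a local amoeba, set $e := e(G_1) = \cdots = e(G_k) \ge 1$ (each $G_i$ is a connected global amoeba, hence non-trivial, so $e \ge 1$ and $|V(G_i)| \ge 2$), and let $\mathcal{P} = \{V(G_1),\ldots,V(G_k)\}$ be the partition of $[n]$ into the index sets of the components. I want to show that $S_G$ preserves the block system $\mathcal{P}$, i.e. $S_G \le \mathrm{Stab}_{S_n}(\mathcal{P})$, where the latter is the subgroup of permutations carrying each block of $\mathcal{P}$ onto a block. Since $\mathcal{P}$ has $k \ge 2$ blocks and at least one block of size $\ge 2$, it is neither the one-block nor the all-singletons partition, whence $\mathrm{Stab}_{S_n}(\mathcal{P}) \subsetneq S_n$; thus $S_G \ne S_n$ and $G$ is not a local amoeba by Definition~\ref{def:amoebas}.

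The heart of the argument is to show that every (non-trivial) feasible edge-replacement of $G$ is \emph{internal} to a single component: if $rs \to kl \in R_G$ then $v_r,v_s,v_k,v_l$ all lie in one $V(G_a)$. I would prove this by an edge-counting invariant. Since $G_\sigma = G - v_rv_s + v_kv_l \cong G$, the result must again decompose into $k$ connected pieces each carrying exactly $e$ edges, so the multiset of per-component edge counts $\{e,\ldots,e\}$ has to be restored. Removing $v_rv_s$ drops its component to $e-1$ edges. If the new edge $v_kv_l$ lands inside a single other component, that component climbs to $e+1$ edges, producing a piece of the wrong size. If instead $v_kv_l$ joins two distinct components, it fuses them into one connected region carrying more than $e$ edges; the only delicate sub-case is when $v_rv_s$ was a bridge of $G_a$ and $v_kv_l$ reattaches one of the two resulting halves to another component, which I rule out by observing that the leftover half is then a component with at most $e-1$ edges. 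In every non-internal case the counts cannot be restored, so the replacement is internal. This bridge bookkeeping is the step I expect to be the most delicate, since one must track how the number of connected pieces of the region $V(G_a)\cup V(G_b)$ changes under the removal and the addition.

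Once internality is in hand, I finish as follows. For an internal replacement the connected-component vertex partition of $G_\sigma$ is again exactly $\mathcal{P}$: the modified $V(G_a)$ must stay connected, for otherwise $G_\sigma$ would have more than $k$ components. The relabeling $v_i \mapsto v_{\sigma^{-1}(i)}$ is a graph isomorphism $G \to G_\sigma$, so it maps components of $G$ to components of $G_\sigma$; as both graphs have component partition $\mathcal{P}$, this forces $\sigma$ to permute the blocks of $\mathcal{P}$. (Trivial replacements give automorphisms of $G$, which preserve $\mathcal{P}$ for the same reason.) Hence every generator $\sigma \in \mathcal{E}_G$ lies in $\mathrm{Stab}_{S_n}(\mathcal{P})$, and since the latter is a subgroup, $S_G = \langle \mathcal{E}_G \rangle \le \mathrm{Stab}_{S_n}(\mathcal{P}) \subsetneq S_n$, which completes the proof.
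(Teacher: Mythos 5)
Your proof is correct and follows the same two-part strategy as the paper's: the global half by iterating Proposition~\ref{prop:HUH'isGA}, and the local half by showing that every feasible edge-replacement of $G$ is internal to a single component. The difference is one of rigor rather than of route, and it cuts in your favor. The paper's proof is two sentences long: it \emph{asserts} internality without argument and then concludes that $S_G \cong S_{G_1} \times \cdots \times S_{G_k} \not\cong S_n$. Your edge-counting analysis is exactly the missing justification of internality, and you correctly isolate the one delicate sub-case (removed edge a bridge of $G_a$, new edge reattaching one half to another component), where the number of components of $G_\sigma$ is still $k$ and only the per-component edge counts detect the failure. Moreover, your concluding step is more accurate than the paper's: $S_G$ need \emph{not} be isomorphic to the direct product $\prod_i S_{G_i}$, because when two components are isomorphic, $A_G \le S_G$ contains permutations exchanging them; for instance, for $G = 2K_2$ one has $|S_G| = |\mathrm{Aut}(2K_2)| = 8$ while $|S_{K_2} \times S_{K_2}| = 4$. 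What is true --- and all that is needed --- is precisely your statement $S_G \le \mathrm{Stab}_{S_n}(\mathcal{P}) \subsetneq S_n$, where $\mathcal{P}$ is the block system of component vertex sets.

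One caveat: your justification ``each $G_i$ is a connected global amoeba, hence non-trivial, so $e \ge 1$'' is not valid, since $K_1$ is a connected global amoeba with $e = 0$. Because the $e(G_i)$ are all equal, the only problematic case is $G_i \cong K_1$ for every $i$, i.e.\ $G = \overline{K_k}$, and there the proposition itself degenerates: by the paper's conventions (Proposition~\ref{prop:basic}(ii)) the empty graph \emph{is} a local amoeba, so $e \ge 1$ must be read as an implicit hypothesis. The paper's own proof silently makes the same assumption, so this is a shared corner case rather than a defect of your argument; but note that $e \ge 1$ is genuinely used in your sub-case where a non-bridge edge of $G_a$ is removed and the new edge joins $V(G_a)$ to $V(G_b)$ (the fused component has $2e$ edges, and $2e \neq e$ only when $e \ge 1$), so it is worth stating it as a hypothesis rather than deriving it incorrectly.
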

\begin{proof}
Let $n = n(G)$. By Proposition \ref{prop:HUH'isGA}, $G$ is a global amoeba. However, $G$ is not a local amoeba because the only possible feasible edge replacements can just interchange edges within the components, implying that
$ \fer (G) \cong \fer (G_1) \times \ldots \times \fer (G_k) \not\cong S_n.$
\end{proof}

In the next proposition, we give a construction of a union of two vertex-disjoint graphs that is always both, a local and a global amoeba. 

\begin{proposition}
Let $G$ be a local amoeba on $n$ vertices with a vertex $v \in V(G)$ such that $\deg(v) = 1$. If $H$ is a copy of $G-v$, which is vertex-disjoint from $G$, then $G \cup H$ is both, a local and a global amoeba.
\end{proposition}
\begin{proof}
Let $\lambda: V(G) \to X=[n]$ be a labeling on $V(G)$ such that $v = v_n$, and let  $\lambda': V(H) \to Y=[n+1,2n-1]$ be a labeling on $V(H)$ such that $v_{n+i}$ is the copy of $v_i$, for $1 \le i \le n-1$. Consider now the labeling  $\lambda'': V(G\cup H) \to X\cup Y=[2n-1]$. Since $G$ is a local amoeba, we know that $\fer (G) = S_X=S_n$. By  Remark \ref{rem:groups}, we have $\fer (G) \le \fer (G\cup H)$. Thus, 
\[ (1 \, 2 \, \ldots \, n-1), ( n-1 \, n) \in \fer(G \cup H).\]
Let now $v_j$ be the neighbor of $v$ and consider the feasible edge-replacement $j \, n \to n+j \, n$, which gives the permutation
\[\sigma= (1 \, n+1)(2 \, n+2) \cdots (n-1 \, 2n-1) \in \fer(G \cup H).\]
Then we have two permutations $(1 \, 2 \, \ldots \, n-1)$ and $\sigma$ which act transitively on $[2n-1] \setminus \{n\}$. Hence, together with the permutation $ ( n-1 \, n) $, they generate $S_{2n-1}$, implying that $G \cup H$ is a local amoeba. Since $G \cup H$ has a vertex of degree $1$, it follows by Proposition \ref{prop:localUK1=local}  that $G \cup H$ is a global amoeba, too.
\end{proof}

The next proposition allows us to enlarge a global amoeba by means of taking a copy of a portion of its components where either an edge is added or deleted.

\begin{proposition}\label{prop:unions_pm_edges}
Let $G = H \cup J$ be a global amoeba, where $H$ and $J$ are vertex-disjoint subgraphs of $G$ (where $J$ can be possibly empty, meaning that $G = H$) and such that $E(G) = E(H) \cup E(J)$. Let $H'$ be a copy of $H$ which is vertex disjoint from $G$. Then we have the following facts.
\begin{enumerate}
\item[(i)] For any $e \in E(\overline{H'})$, $G \cup (H'+e)$ is a global amoeba.
\item[(ii)] For any $e \in E(H')$, $G \cup (H'-e)$ is a global amoeba.
\end{enumerate} 
\end{proposition}
\begin{proof}
We will give only the proof of item (i) as the one of (ii) can be deduced similarly.
We consider a labeling $\lambda: V(G \cup (H'+e))\to Y$ such that $\lambda(V(H)) = X$ and  $\lambda(V(H')) = X'$. As usual, we set $v_i = \lambda^{-1}(i)$, for each $i \in Y$. Moreover, for $x \in X$, let $v_{x'}$ be the copy of $v_{x}$ in $H'$ (and so we have $X' = \{x' \;|\; x \in X\}$), and let $e=v_{j'}v_{k'}$. Then $j'k' \to jk$ is a feasible edge-replacement in $G \cup (H'+e)$ and the permutation $\sigma \in S_Y$ with $\sigma(i) = i'$ and $\sigma(i') = i$, for $i \in X$, and $\sigma(i) = i$, for $i \in Y \setminus (X \cup X')$ is in $\fer (G \cup (H'+e))$. Then $x' \in \fer (G \cup (H'+e)) x$, for every $x \in X$. Since $G$ is a global amoeba, we know, by the equivalence of items (i) and (iii) of Theorem \ref{thm:eq},  that  $\fer (G)x$, and, by Remark \ref{rem:groups},  also $\fer (G \cup (H'+e)) x$, contains an element $l \in Y \setminus X'$ such that $\deg_{G \cup (H'+e)}(v_l) = \deg_G(v_l)= 1$. Hence, $G \cup (H'+e)$ is a global amoeba and we are done.
\end{proof}

The converse statements of Proposition \ref{prop:unions_pm_edges} are not valid. For item (i), we can take $G=C(k,1) \cup (C_k \cup K_1)$, where $C(k,1)$ is the graph obtained from a cycle on $k$ vertices by attaching a pendant vertex. Then $G$  is a global amoeba by Proposition \ref{prop:unions_pm_edges}(ii) because $C(k,1)$ is a global amoeba (see Example \ref{ex:PnandtK2}(iii)). However, $C_k \cup K_1$ is not a global amoeba because it violates Proposition \ref{prop:la_degrees}. On the other hand, for item (ii), consider the graph $C_k \cup P_k$, $k \ge 3$, that is a global amoeba as we will show in Example \ref{ex:GnotL}(ii). However, we also know that $P_k$ is a global amoeba (Example \ref{ex:PnandtK2}), while $C_k$ is not (by Proposition \ref{prop:la_degrees}).

\begin{example}\label{ex:GnotL}
 The following graphs are global but not local amoebas: 
   \begin{itemize}
     \item[(i)] The graph obtained by taking the disjoint union of $t$ copies of a path of order $k$, $t P_k$, for $t\geq 2$ and $k \ge 2$.
        \item[(ii)] The disjoint union of a path and a cycle of the same order $k$, $P_k\cup C_k$, for $k\geq 3$.
    \end{itemize}    
\end{example}
\begin{proof}
 (a) For $t\geq 2$ and $k\geq 2$, $tP_k$ is not a local amoeba by Proposition \ref{prop:GAunion-equal-edges}. However, the graph $tP_k$ is a global amoeba because of Proposition \ref{prop:HUH'isGA} and the fact that $P_k$ is a global amoeba (see Example \ref{ex:PnandtK2}).\\
 (b) For $k\geq 3$, the disjoint union of a path $P_k$ and a cycle $C_k$ is a global amoeba by means of Proposition \ref{prop:unions_pm_edges}(i) because $P_k$ is a global amoeba (see Example \ref{ex:PnandtK2}) and $C_k$ can be obtained from $P_k$ by adding an edge. However, $P_k\cup C_k$  is not a local amoeba. To show this, let us describe the graph with the path $v_1v_2 \dots v_k$ and the cycle $v_{k+1}v_{k+2} \dots v_{2k}v_{k+1}$. Since $P_k$ is a local amoeba, we know that the permutations corresponding to feasible edge-replacements that interchange edges with vertices in $\{v_1,v_2, \dots, v_k\}$ generate the symmetric group $S_k$. Moreover, there are no non-trivial feasible edge-replacement involving edges with  vertices  $\{v_{k+1},v_{k+2}, \dots, v_{2k}\}$ (because $C_k$ is regular). Thus, the corresponding permutations, which are only the automorphisms of $C_k$,   generate the cyclic group on $k$ elements. Finally, the other possible feasible edge-replacements are those that arise by taking one edge from the cycle and moving it to the path such that we join both end-vertices. These permutations operate by interchanging completely the sets $\{1,2, \ldots, k\}$ and $\{k+1,k+2, \ldots, 2k\}$. Thus, we cannot hope for obtaining a copy of $P_k \cup C_k$ where both the path and the cycle have vertices from both sets $\{v_1,v_2, \dots, v_k\}$ and $\{v_{k+1},v_{k+2}, \dots, v_{2k}\}$. Hence, it is not a local amoeba. 
\end{proof}
Note that item (i) of  Example~\ref{ex:GnotL} generalizes the fact that, for $n$ even, $\frac{n}{2}K_2$ is a global amoeba (see Example \ref{ex:PnandtK2}), but its proof is much more direct in the group theoretical setting than in the graph theoretical setting.

Observe that Proposition \ref{prop:HUH'isGA} and Proposition \ref{prop:unions_pm_edges} offer a wide range of possibilities for building amoebas with a diversity of components. For example, given a global amoeba $G$, the union of $G$ together with any union of graphs that arise from $G$ by adding an edge or by deleting an edge is a global amoeba. One can also include components that are built from smaller components by joining them with edges (needing possibly to apply Proposition~\ref{prop:unions_pm_edges}(i) several times). In fact, by iteratively applying Proposition \ref{prop:unions_pm_edges}, one can manage to have any connected graph $G$ as a connected component of a global amoeba, as we will show in the following corollary (see also Figure \ref{fig:conn_comp} for an illustrative drawing of the method).

\begin{theorem}\label{thm:conn_comp}
Let $G$ be any connected graph. Then there is a global amoeba $H$ having $G$ as one of its components.
\end{theorem}

\begin{proof}
We will construct a global amoeba $H$ by means of the following recursion. Let $H_0 = K_1$. For $i \ge 1$, we do the following. If $H_{i-1} \not\cong G$, then either there is one edge $e \in E(\overline{H_{i-1}})$ such that the graph $H_{i-1}+e$ is contained in $G$ as a subgraph, or there is one edge $e \in  E(\overline{H_{i-1} \cup K_1}) \setminus E(\overline{H_{i-1}})$ such that $(H_{i-1} \cup K_1) + e$ is contained in $G$ as a subgraph. In the first case we set $H_i$ to be a copy of $H_{i-1}+e$, in the second case to be a copy of $(H_{i-1} \cup K_1)+e$. Since we add in each step a new edge and the obtained graph is always contained in $G$ as a subgraph, after $m = |E(G)|$ steps, we will obtain a component $H_m \cong G$. By means of $m$ consecutive applications of Proposition \ref{prop:unions_pm_edges} (i) (where sometimes $H_{i-1}$ and sometimes $H_{i-1}\cup H_0$ plays the role of $H'$) and, since $H_0 = K_1$ is a global amoeba, it follows that $H = \bigcup_{i=0}^m H_i$ is a global amoeba having one of its components isomorphic to $G$.
\end{proof}

\begin{figure}[H]
\begin{center}
	\includegraphics[scale=0.5]{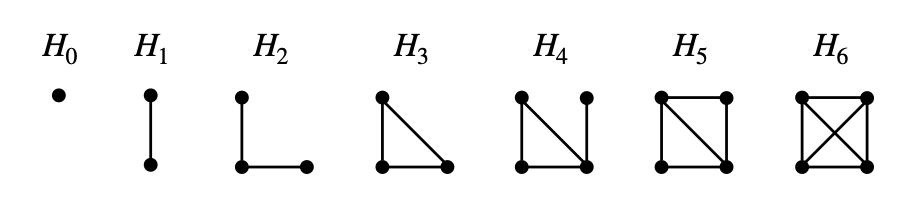}
	\caption{\small Example illustrating the proof of Theorem \ref{thm:conn_comp} with $G = K_4$.} 
		\label{fig:conn_comp}
	\end{center}
\end{figure}

As a consequence of Proposition \ref{prop:unions_pm_edges}, we obtain that there are global amoebas having arbitrarily large chromatic and clique number, but in proportion to their order these numbers may be small. In Section \ref{sec:e(G)X(G)w(G)}, we will present an example of a connected global and local amoeba whose clique and chromatic numbers equal to half its order plus one and we show that this is best possible.

\subsection{Fibonacci amoeba-trees}\label{sec:Fibo-trees}

As we know, paths, the simplest trees one can imagine having only $1$ and $2$-degree vertices, are global amoebas. In this section, we will construct an infinite family of trees via a Fibonacci-recursion which are global amoebas and which will have arbitrarily large maximum degree (and, by Proposition \ref{prop:la_degrees},  vertices of all other possible degrees).

\begin{lemma}\label{la:subgraphs}
Let $G$ be a graph provided with a labeling $\lambda: V(G) \to X$. Let $G = G' \cup G''$ for two subgraphs $G'$ and $G''$ that are not necessarily disjoint. Let $\lambda' =\lambda\restrict{V(G')}$, and $\lambda'' =\lambda\restrict{V(G'')}$ be the corresponding labelings on $G$ and $G'$, and let $\Gamma' = \fer(G')$ and $\Gamma'' = \fer(G'')$. Set $I = \lambda(V(G')) \cap \lambda(V(G''))$. Then, for every $\sigma \in \mathcal{E}_{G'} \cap \bigcap_{j \in I}{\rm Stab}_{\Gamma'}(j)$, the permutation $\sigma \cup {\rm id}_{\Gamma''}$ is in $\mathcal{E}_{G}$.
\end{lemma}

\begin{proof}
Let $\sigma \in \mathcal{E}_{G'} \cap \bigcap_{j \in I}{\rm Stab}_{\Gamma'}(j)$ and $\widehat{\sigma} = \sigma \cup {\rm id}_{\Gamma''}$, and set $v_i = \lambda^{-1}(i)$ for $i \in X$. Then there is a feasible-edge replacement $rs \to kl \in R_{G'}$ with $r,s,k,l \in \lambda(V(G'))$. This edge-replacement gives a copy $G'_{\sigma}$ of $G'$ that leaves the vertices $v_i$ with $i \in I$ untouched, i.e. $\sigma(i) = i$ for all $i \in I$. Then $G = G' \cup G'' \cong G'_{\sigma} \cup G'' = G_{\widehat{\sigma}} $. Hence, $rs \to kl$ is also a feasible edge-replacement in $G$ and $\widehat{\sigma} \in \fer_G(rs \to kl) \subseteq \mathcal{E}_G$.
\end{proof}

\begin{example}\label{ex:subgraphs}
The graph $G$ depicted below in Figure \ref{fig:G'UG''} is built by the union of the graph $G'$  and the graph $G''$, where $\lambda: V(G) \to [9]$, $v_i = \lambda^{-1}(i)$, and $\lambda(V(G')) \cap \lambda(V(G'')) = \{4,5,6\}$. Let $\Gamma' = \fer(G')$ and $\Gamma'' = \fer(G'')$. The edge-replacement $12 \to 13$ is feasible in $G'$ and we have that $\sigma = (2 \, 3) \in \fer_{G'}(12 \to 13)$. Since $\sigma = (2 \, 3) \in \mathcal{E}_{G'} \cap \bigcap_{i = 4,5,6} {\rm Stab}_{\Gamma'}(i)$, it follows by previous lemma that $\widehat{\sigma} = \sigma \cup {\rm id}_{\Gamma''} = (2 \, 3) \in \mathcal{E}_{G}$.
\end{example}

\begin{figure}[H]
\begin{center}
	\includegraphics[scale=0.4]{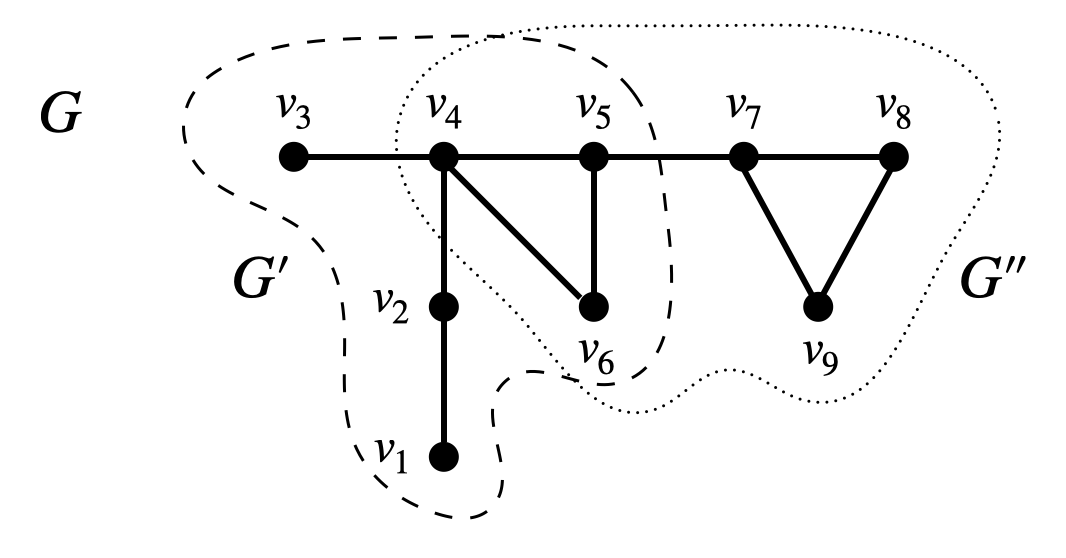}
	\caption{\small Sketch for Example \ref{ex:subgraphs}.}
	\label{fig:G'UG''}
\end{center}	
\end{figure}

Let $G$ be a graph equipped with a labeling $\lambda: V(G) \to X$, and let $v_i = \lambda^{-1}(i)$. Let $I = \{i_1, i_2, \ldots, i_k \} \subseteq X$  and let $H$ be another graph provided with a special vertex called the root of $H$. We define $G *_I H$ as the graph obtained by taking $G$ and $k$ different copies $H_1, H_2, \ldots, H_k$ of $H$ and identifying the root of $H_j$ with vertex $v_{i_j}$ of $G$, for $1 \le j \le k$ (see Figure \ref{fig:expansion}).
We will also use the following language. Given two subgraphs $G_1, G_2 \le G$ that are isomporphic, and given an isomorphism $\psi: V(G_1) \to V(G_2)$, then we say that $\psi$ \emph{induces a bijection} $\varphi: \lambda(V(G_1)) \to \lambda(V(G_2))$ by means of $\lambda(v) \mapsto \lambda(\psi(v))$, for $v \in V(G_1)$. That is, $\varphi$ is the bijection between the labels of the vertices of $G_1$ and $G_2$ corresponding to the given isomorphism.

\begin{lemma}\label{la:expansion}
Let $G$ be a graph equipped with a labeling $\lambda: V(G) \to X$. Let $H$ be another graph provided with a root. Let $I \subseteq X$, and consider the graph $G *_I H$ with labeling $\lambda^*: V(G *_I H) \to X^*$, where $\lambda^*\restrict{V(G)} = \lambda$. For each $i \in I$, let $H_i$ be the copy of $H$ attached to vertex $v_i = \lambda^{-1}(i)$, which is the root of $H_i$ in $G *_I H$. Let $X_i = \lambda^*(V(H_i))$, $i \in I$, and let $\varphi_{i, j} : X_i \to X_j$ be the bijection given by an isomorphism between $H_i$ and $H_j$ that sends $v_i$ to $v_j$, for $i,j \in I$. If $\sigma \in \mathcal{E}_G \cap {\rm Stab}_{\fer(G)}(I)$, then 
\[\sigma \cup \bigcup_{i \in I} \varphi_{i, \sigma(i)} \in \mathcal{E}_{G *_I H}.\]
\end{lemma}

\begin{proof}
Let $\sigma \in \mathcal{E}_G \cap {\rm Stab}_{\fer(G)}(I)$. Then there is a feasible-edge replacement $rs \to kl \in R_{G}$ with $r,s,k,l \in X$. This edge-replacement gives a copy $G_{\sigma}$ of $G$ such that $\sigma(i) \in I$ for all $i \in I$. Observe that the only intersections among the sets $X_i$, $i \in I$, and $X$ are given by $X_i \cap X = \{i\}$. Hence, $\widetilde{\sigma}$ is well defined as $\widetilde{\sigma}(x) = \sigma(x)$, for $x \in X$,  $\widetilde{\sigma}(x) = \varphi_{i, \sigma(i)}(x)$, for $x \in X_i$, and $\widetilde{\sigma}(i) = \sigma(i) = \varphi_{i,\sigma(i)}$, for $i \in I$. Then 
\[(G *_I H)_{\widetilde{\sigma}} = G_{\sigma} *_I H \cong G *_I H,\]
implying that  $rs \to kl$ is also a feasible edge-replacement in $G *_I H$ and thus $\widetilde{\sigma} \in \mathcal{E}_{G *_I H}$.
\end{proof}

\begin{example}\label{ex:expansion}
Let $G = v_1v_2v_3v_4v_5 \cong P_5$ and $H \cong K_{1,3}+e$, i.e. a star on three peaks together with an edge joining two of the vertices of degree $1$, where we designate one of the vertices of degree $2$ as the root of $H$. Let $\lambda: V(G) \to [5]$ with ${\lambda}(v_i) = i$, $i \in [5]$, let $I = \{2,3\}$ and let $G*_I H$ be as in Figure \ref{fig:expansion}. Let $\lambda^*:V(G *_I H) \to [11]$ such that ${\lambda^*}(v_i) = i$. Then $4\, 5 \to 1 \, 5 \in R_G$ with $\sigma = (1 \, 4)(2 \, 3) \in \fer_G(4\, 5 \to 1 \, 5)$, and $\varphi_{2,3} = \varphi_{3,2} = (2 \, 3)(6 \, 9)(7 \, 10)(8 \, 11)$. Since $\sigma \in \mathcal{E}_G \cap  {\rm Stab}_{\fer(G)}(\{2,3\})$, it follows by Lemma \ref{la:expansion} that $\widetilde{\sigma}=(1 \, 4) (2 \, 3)(6 \, 9)(7 \, 10)(8 \, 11) \in \mathcal{E}_{G *_I H}$.
\end{example}

\begin{figure}[H]
\begin{center}
	\includegraphics[scale=0.4]{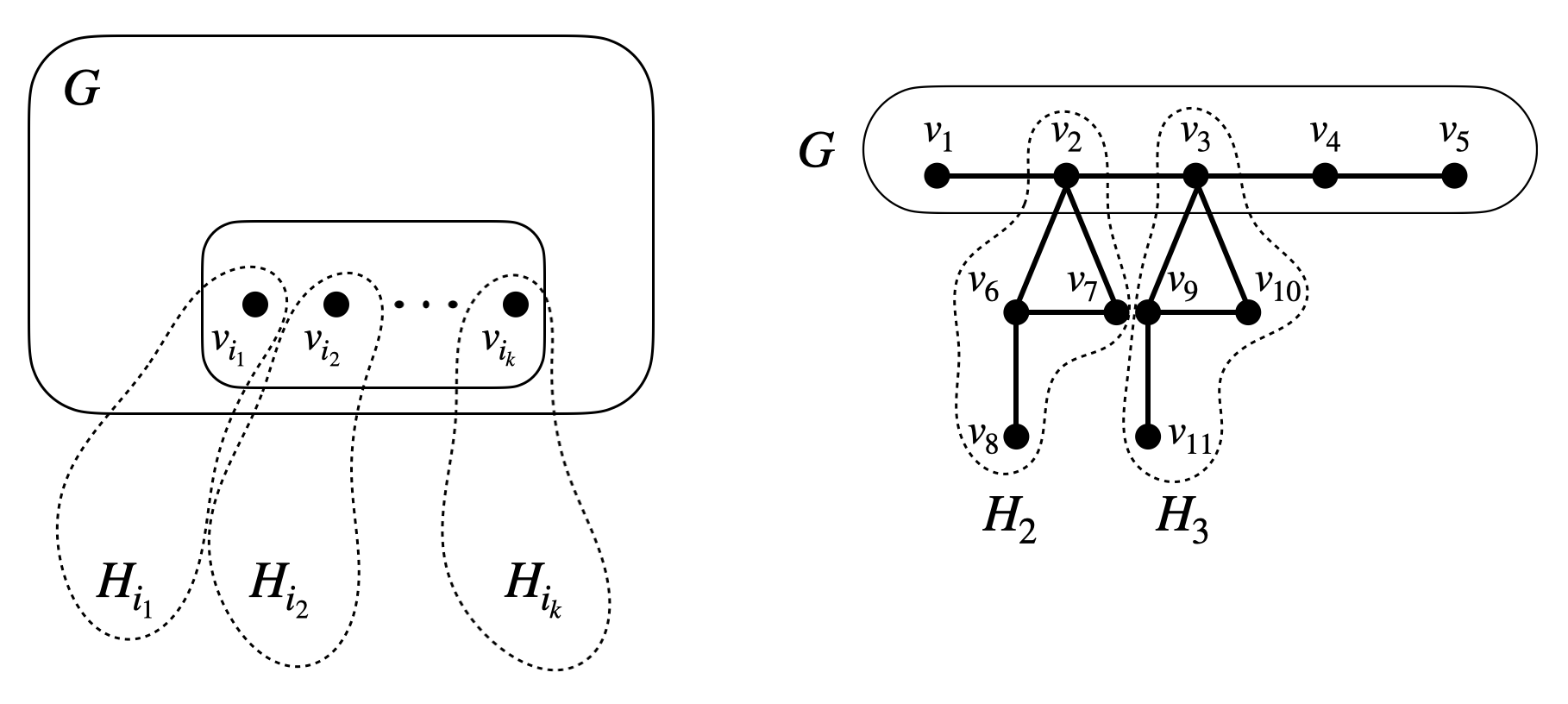}
	\caption{\small Sketch of a graph $G *_I H$, where $I = \{i_j \;|\; j \in [k]\}$, and of Example \ref{ex:expansion}.}
	\label{fig:expansion}
\end{center}	
\end{figure}

We will describe a family of trees that are constructed via a Fibonacci recursion. We define $T_1 = T_2 = K_2$. For $i \ge 2$, we define $T_{i+1}$ as the tree consisting of one copy $T$ of $T_{i-1}$ and one copy $T'$ of $T_i$, where a vertex of maximum degree of $T$ is joined to a vertex of maximum degree of $T'$ by means of a new edge, see Figure \ref{fig:Fib_amoebas}. Observe that $\Delta(T_i) = i - 1$ for $i \ge 2$, while $n(T_i) = 2F_i$, being $F_i$ the $i$-th Fibonacci number. Note also that, for $i \ge 4$, $T_i$ has only one vertex of maximum degree, which we will call the \emph{root} of $T_i$. For the case that $i \le 3$, we will designate one of the vertices of maximum degree as the root of $T_i$ and this will be the vertex that will be used to attach the new edge in the construction of $T_{i+1}$.

\begin{figure}[H]
\begin{center}
	\includegraphics[scale=0.3]{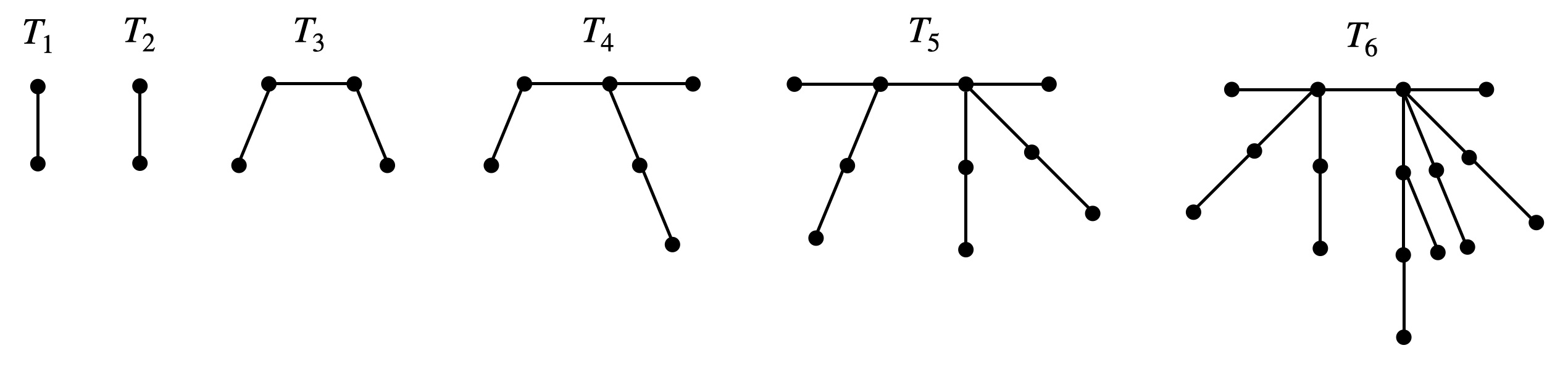}
	\caption{\small Fibonacci amoeba-trees $T_i$, $1 \le i \le 6$.}
	\label{fig:Fib_amoebas}
\end{center}	
\end{figure}

\begin{theorem}\label{thm:Ti_GA}
$T_i$ is a global amoeba for all $i \ge 1$.
\end{theorem}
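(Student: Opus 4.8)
The plan is to prove the statement by strong induction on $i$, reducing everything to the orbit characterization of global amoebas. Concretely, by Theorem~\ref{thm:eq}(ii) it suffices to show that for every vertex $v_x$ of $T_{i+1}$ the orbit $S_{T_{i+1}}x$ contains the index of a leaf; equivalently, by Corollary~\ref{rem:DownDegrees}, that every vertex of degree at least $2$ can be moved by some element of $S_{T_{i+1}}$ onto a vertex of degree one less. The base of the induction is $T_1=T_2=K_2=P_2$, which is a global amoeba by Example~\ref{ex:amoebas} 2(a); since the root of $T_i$ is only canonically determined (as the unique maximum-degree vertex) for $i\ge 4$, I would also verify the small trees $T_3=P_4$, $T_4$ and $T_5$ by hand, so that the inductive step can be run for $i\ge 4$ with a genuinely distinguished root.

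For the inductive step, write $T_{i+1}$ as the union of a copy $T'\cong T_i$ on index set $A$ and a copy $T''\cong T_{i-1}$ on a disjoint index set $B$, together with the single joining edge $e=\rho r$, where $\rho\in A$ is the root of $T'$ (and of $T_{i+1}$) and $r\in B$ is the root of $T''$. The engine of the argument is the subgraph lemma (Lemma~\ref{la:subgraphs}) applied to two complementary decompositions. Taking $G'=T'$ and $G''$ to be $T''$ together with the joining edge $e$, with shared index set $I=\{\rho\}$, every $\sigma\in\mathcal{E}_{T_i}\cap{\rm Stab}_{S_{T_i}}(\rho)$ lifts to an element of $\mathcal{E}_{T_{i+1}}$ that fixes $B$ pointwise and acts as $\sigma$ on $A$; hence $\langle \mathcal{E}_{T_i}\cap{\rm Stab}_{S_{T_i}}(\rho)\rangle\le S_{T_{i+1}}$. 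Symmetrically, with $G'=T''$ and $G''=T'$ together with $e$, and $I=\{r\}$, the root-fixing feasible edge-replacements of $T_{i-1}$ lift into $S_{T_{i+1}}$. Thus the inductive hypothesis supplies a large stock of feasible edge-replacements inside each part, provided they fix the relevant attachment vertex.

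With these lifts in hand, I would finish in three stages. First, every non-root vertex of $T'$ (resp. $T''$) whose orbit under the root-stabilizer already meets a leaf is handled directly, because the non-root leaves of $T'$ and $T''$ remain leaves of $T_{i+1}$. Second, the two high-degree vertices $\rho$ (degree $i$) and $r$ (degree $i-1$) are treated using the joining edge: since $T_{i+1}$ has a unique vertex of degree $i$, any degree-lowering move at $\rho$ must simultaneously promote another vertex to degree $i$, and the natural candidate is $r$; exhibiting a feasible edge-replacement that transfers one pendant subtree from $\rho$ to $r$ both realizes the required step of Corollary~\ref{rem:DownDegrees} for $\rho$ and places $\rho$ and $r$ in a common orbit. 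Third, once $\rho$ and $r$ are linked and each part's non-root vertices reach leaves, Proposition~\ref{prop:ga_degrees} guarantees that every intermediate degree is present, so the degree chain can be descended all the way to a leaf, establishing Theorem~\ref{thm:eq}(ii) for $T_{i+1}$.

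The main obstacle --- and the place where the naive induction breaks --- is exactly the root-fixing constraint forced by Lemma~\ref{la:subgraphs}: the inductive hypothesis guarantees that orbits under the full group $S_{T_i}$ reach leaves, but the lift only transports elements of the smaller group ${\rm Stab}_{S_{T_i}}(\rho)$. I therefore expect the real work to lie in strengthening the inductive statement to the assertion that in $T_i$ the orbit of every non-root vertex under ${\rm Stab}_{S_{T_i}}(\rho)$ already contains a leaf, and in verifying that this strengthened property is preserved by the Fibonacci recursion. The self-similarity $T_i\supset T_{i-1}$ (with $\rho$ serving simultaneously as the root of an internal copy of $T_{i-1}$ and as the attachment point of the external copy $T''$) should be what makes this propagation possible, by supplying the extra feasible edge-replacement that moves the root while preserving the isomorphism type, thereby gluing the root-stabilizer orbits of the two parts into the single orbit needed.
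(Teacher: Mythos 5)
Your high-level architecture matches the paper's: induction with a strengthened, root-stabilizer statement, the Fibonacci decomposition $T_{i+1}=T'\cup T''+e$ with $T'\cong T_i$, $T''\cong T_{i-1}$, lifts of root-fixing feasible edge-replacements via Lemma~\ref{la:subgraphs}, and a final subtree-transfer move that swaps $\rho$ and $r$ (this is exactly the paper's $\widetilde{\rho}$, coming from $cd\to bd$ via Lemma~\ref{la:expansion}). But there is a genuine gap, and it sits precisely where you say you ``expect the real work to lie'': nothing in your toolkit moves the orbit of $r$ anywhere useful. Your lifted generators from $T'$ fix $r$ pointwise (they act only on $T'$), your lifted generators from $T''$ fix $r$ because they lie in ${\rm Stab}_{S_{T''}}(r)$, and your transfer move swaps $\rho$ and $r$. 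Hence the set $\{\rho,r\}$ is invariant under every generator you produce, so the orbit of $r$ under the group they generate is exactly $\{\rho,r\}$, which contains no leaf and no vertex of degree $\deg(r)-1$. Consequently neither Theorem~\ref{thm:eq}(ii) nor Corollary~\ref{rem:DownDegrees} can be verified for $r$, and your strengthened inductive statement (every non-root vertex's root-stabilizer orbit meets a leaf) cannot be propagated to $T_{i+1}$ either, since $r$ is a non-root vertex of $T_{i+1}$.

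The missing idea is the paper's second swap $\widetilde{\tau}$, which requires descending one more level in the recursion: write $T''=T_A\cup T_B+ab$ with $T_A\cong T_{i-3}$, $T_B\cong T_{i-2}$ rooted at $r=b$, and $T'=T_C\cup T_D+cd$ with $T_C\cong T_{i-1}$ rooted at $\rho=c$, $T_D\cong T_{i-2}$ rooted at $d$. Since $T_B\cong T_D$, the edge-replacement $ab\to ad$ (transferring $T_A$ from $b$ to $d$) is feasible, and by Lemma~\ref{la:expansion} it yields a permutation that \emph{fixes} $\rho$ and maps $r\mapsto d$, a non-root vertex of $T'$. This is the move that glues the orbit of $r$ into the orbit structure of $T'$ (where leaves are reachable by induction), after which your plan does close: non-root vertices of both parts reach leaves within the root-stabilizer, $r$ reaches a leaf via $\widetilde{\tau}$, and $\rho$ reaches a leaf via the transfer move followed by the orbit of $r$. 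Two smaller points: your stage~3 invocation of Proposition~\ref{prop:ga_degrees} is circular (it is a necessary condition on graphs already known to be global amoebas, not a tool for producing orbit elements), and note that the paper's inductive statement is stronger than yours --- transitivity of the root-stabilizer on all non-root indices, not merely leaf-reaching --- which is what makes the orbit-merging argument clean; your weaker statement can be made to work, but only after the $\widetilde{\tau}$ move is added.
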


\begin{proof}
Let $T$ be a tree isomorphic to $T_i$ equipped with a labeling $\lambda:V(T) \to X$, and let $v_i = \lambda^{-1}(i)$, and $\Gamma = \fer(T)$. Let $c \in J$ such that $v_c$ has maximum degree in $T$. We will show by induction on $i$ that there is a subset $\mathcal{R} \subseteq \mathcal{E}_T \cap {\rm Stab}_{\Gamma}(c)$ such that $\langle \mathcal{R} \rangle$ acts transitively on $X \setminus \{c\}$.

If $i = 1,2$, there is nothing to prove. If $i = 3$, then $T \cong P_4$, say $T = v_4v_3v_1v_2$ with $c = 1$. Then the feasible edge-replacements $34 \to 24$ and $13 \to 14$ give respectively the permutations $(2 \, 3)$ and $(3 \, 4)$, which act transitively on $\{2,3,4\} = X \setminus \{c\}$. If $i = 4$, then let $T$ be the tree built from the path $v_4v_3v_1v_2 \cong T_3$ and a $T_2 \cong K_2$, given by $v_5v_6$, and the edge $v_1v_5$ joining both trees. Clearly, the only maximum degree vertex is $v_1$ and thus $c = 1$. Then the feasible edge-replacements $34 \to 24$ and $13 \to 14$ give respectively the permutations $(2 \, 3)$ and $(3 \, 4)$, which together with the automorphism  $(3 \, 5) (4 \, 6)$, act transitively on $[6] \setminus \{1\}$ leaving $c = 1$ fixed. 

Now suppose that $i \ge 4$ and that we have proved the above statement for integer values at most $i$. Let $T \cong T_{i+1}$. For a subset $J \subset X$, we define $V_J = \{v_j \;|\; j \in J\}$, $T_J = T[V_J]$, and $\Gamma_J = \fer(T_J)$ using the inherited labeling $\lambda\restrict{V_J}$. Let $X = U \cup W$ be a partition of $X$ such that $T_U \cong T_{i-1}$ and $T_W \cong T_i$. Further, let $U = A \cup B$ and $W = C \cup D$ be partitions such that $T_A \cong T_{i-3}$, $T_B \cong T_{i-2}$, $T_C \cong T_{i-1}$, and $T_D \cong T_{i-2}$. By construction, $v_c$ is the root of $T_C$. Let $a, b, d \in X$ be such that $v_a, v_b, v_d$ are the roots of $T_A, T_B$, and $T_D$, respectively.  Notice that $v_av_bv_cv_d$ is a path of length $4$ in $T$. See Figure \ref{fig:TreeT} for a sketch.

\begin{figure}[H]
\begin{center}
	\includegraphics[scale=0.5]{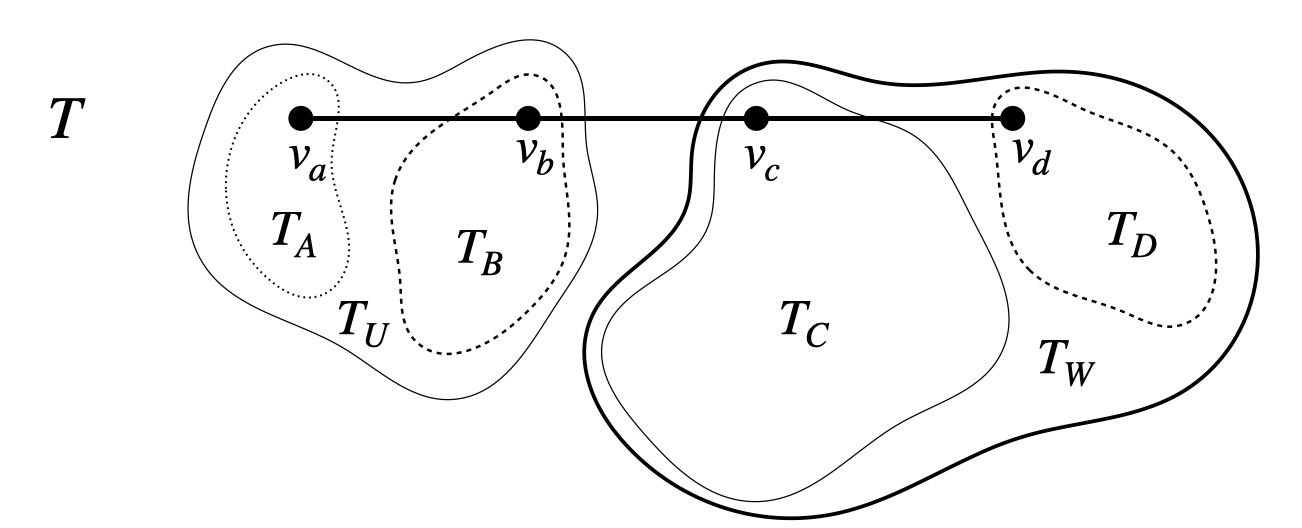}
	\caption{\small Sketch of the tree $T \cong T_{i+1}$ with its subtrees $T_U \cong T_{i-1}$ and $T_W \cong T_i$, and subsubtrees $T_A \cong T_{i-3}$, $T_B \cong T_{i-2}$, $T_C \cong T_{i-1}$, and $T_D \cong T_{i-2}$.}
	\label{fig:TreeT}
\end{center}	
\end{figure}

By the induction hypothesis, there are subsets $\mathcal{R}_U \subseteq \mathcal{E}_{T_U} \cap {\rm Stab}_{\Gamma_U}(b)$ and $\mathcal{R}_W \subseteq \mathcal{E}_{T_W} \cap {\rm Stab}_{\Gamma_W}(c)$ such that $\langle \mathcal{R}_U \rangle$ acts transitively on $U \setminus \{b\}$ and $\langle \mathcal{R}_W \rangle$ acts transitively on $W \setminus \{c\}$. Let $\widehat{\mathcal{R}}_U = \{\sigma \cup {\rm id}_{\Gamma_W} \;|\; \sigma \in \mathcal{R}_U\}$ and $\widehat{\mathcal{R}}_W = \{\sigma \cup {\rm id}_{\Gamma_U} \;|\; \sigma \in \mathcal{R}_W\}$. Then, by Lemma~\ref{la:subgraphs}, $\widehat{\mathcal{R}}_U, \widehat{\mathcal{R}}_W \subseteq \mathcal{E}_{T}$. Moreover, the transitive action is inherited, i.e., $\langle \widehat{\mathcal{R}}_U \rangle$ acts transitively on $U \setminus \{b\}$ and $\langle \widehat{\mathcal{R}}_W \rangle$ acts transitively on $W \setminus \{c\}$.

Consider now the tree $T(B,D)$ that is obtained by identifying all vertices from $V_B$ with vertex $v_b$ and all vertices from $V_D$ with vertex $v_d$, i. e. we contract the sets $V_B$ and $V_D$ each into a single vertex (see Figure \ref{fig:T(B,D)&T(U,C)}). Observe  that $ab \to ad$ is a feasible edge-replacement in $T(B,D)$ with $(b \, d) \in \fer_{T(B,D)}(ab \to ad)$, and that $T \cong T(B,D) *_{\{b,d\}} T_{i-2}$. Since $T_B \cong T_D \cong T_{i-2}$, there is a bijection $\varphi: B \to D$ such that $\varphi(b) = d$ given by an isomorphism between $T_B$ and $T_D$ that maps $v_b$ to $v_d$. Then, by Lemma \ref{la:expansion}, we have that $ab \to ad$ is a feasible edge-replacement in $T$ with $\tau \in \fer_T(ab \to ad)$ defined by 
\[\tau = (b \, d) \cup \varphi \cup \varphi^{-1} \cup {\rm id}_{\Gamma_A} \cup {\rm id}_{\Gamma_C},\] 
and such that  $\tau\in \mathcal{E}_{T}$.
Moreover, $\tau$ leaves $c$ fixed and so $\tau \in \mathcal{E}_T \cap {\rm Stab}_{\Gamma}(c)$. Now we define
\[\mathcal{R} = \widehat{\mathcal{R}}_U \cup \widehat{\mathcal{R}}_W \cup \{ \tau \}.\]
Since $\langle\widehat{\mathcal{R}}_U \rangle$ acts transitively on $U \setminus \{b\}$, and $\langle \widehat{\mathcal{R}}_W \rangle$ acts transitively on $W \setminus \{c\}$, these two sets together with  $\tau$ generate a group $\langle \mathcal{R} \rangle$ that acts transitively on $X \setminus \{c\}$. 

Hence, we have shown that if $T \cong T_i$, for any $i \ge 1$, then there is a subset $\mathcal{R} \subseteq \mathcal{E}_T \cap {\rm Stab}_{\Gamma}(c)$ such that $\langle \mathcal{R} \rangle$ acts transitively on $X \setminus \{c\}$, where $c \in X$ such that $v_c$ has maximum degree in $T$.

To finish the proof, we will show that there is a permutation $\rho \in \Gamma$ such that $\langle \mathcal{R} \cup \{\rho\} \rangle$ acts transitively on $X$, meaning that $\Gamma$ acts transitively on $X$, too, which implies that $T$ is a global amoeba by Theorem \ref{thm:eq}(iii). Since we know already that $\langle \mathcal{R} \rangle$ acts transitively on $X \setminus \{c\}$, we just need to find a $\rho\in \Gamma$ with $\rho(c) \neq c$. Indeed, there is such a permutation $\rho$, namely one produced by the feasible edge-replacement $cd \to bd$ in $T$, which, by Lemma \ref{la:expansion}, can be obtained by means of the permutation $(b \, c) \in \fer_{T(U,C)}(cd \to bd)$ through 
\[\rho = (b \, c) \cup \psi \cup \psi^{-1} \cup {\rm id}_{\Gamma_W},\] 
where $\psi: C \to U$ is the bijection with $\psi(c) = b$ given by an isomorphism between $T_C$ and and $T_U$ that sends $v_c$ to $v_b$. Hence, $T_i$ is a global amoeba for all $i \ge 1$. 
\end{proof}

\begin{figure}[H]
\begin{center}
	\includegraphics[scale=0.4]{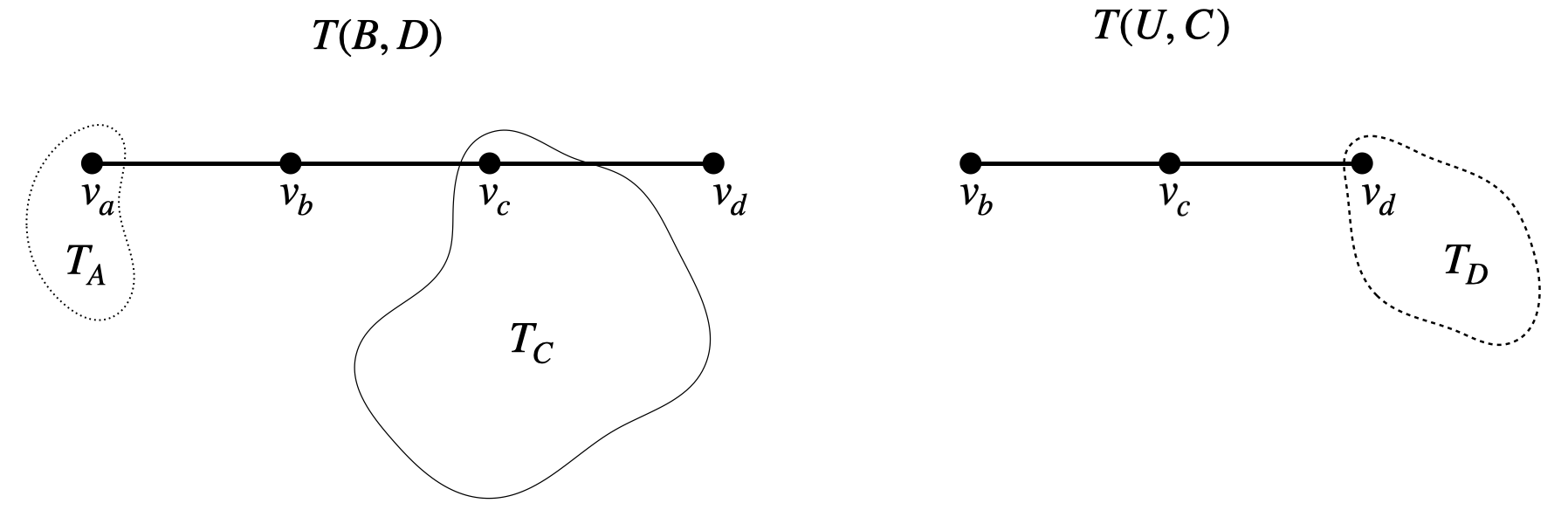}
	\caption{\small Trees $T(B,D)$ and $T(U,C)$.}
	\label{fig:T(B,D)&T(U,C)}
\end{center}	
\end{figure}

That $T_1$, $T_2$ and $T_3$ are local amoebas follows directly from Example \ref{ex:PnandtK2}. To see that $T_5$ is also a local amoeba, let  $T_5$ have vertices $v_i$, $1 \le i \le 10$, distributed as in Figure \ref{fig:T5_tr_action}, and consider the feasible edge-replacements $9 \, 10 \to 8 \, 10$, $5 \, 6 \to 2 \, 6$, $1 \, 7 \to 1 \, 9$, and $1 \, 5 \to 7 \, 5$ that produce the permutations $(8 \; 9)$, $(2 \; 5)$, $(7 \; 9)(8 \; 10)$, and $(1 \; 7)(2 \; 8)(3 \; 9)(4 \; 10)$. It is not difficult to check that, these permutations act transitively on $[10] \setminus \{6\}$ (see Figure \ref{fig:T5_tr_action} for a visual representation of this partial orbit). Finally, consider the feasible edge-replacement $1 \, 5 \to 1 \, 6$ that gives the permutation $(5 \; 6)$, which together with the above $4$ permutations, generate $S_{10}$ by Observation  \ref{obs:daniel}. A very similar argument can be applied for $T_4$ to show that it is a local amoeba. Hence, $T_i$ is a local amoeba for all $1 \le i \le 5$. However, for $i \ge 6$, the above argument cannot be generalized that simply. We leave this as an open question (see Problem \ref{probl:Ti_LA}).

\begin{figure}[H]
\begin{center}
	\includegraphics[scale=0.65]{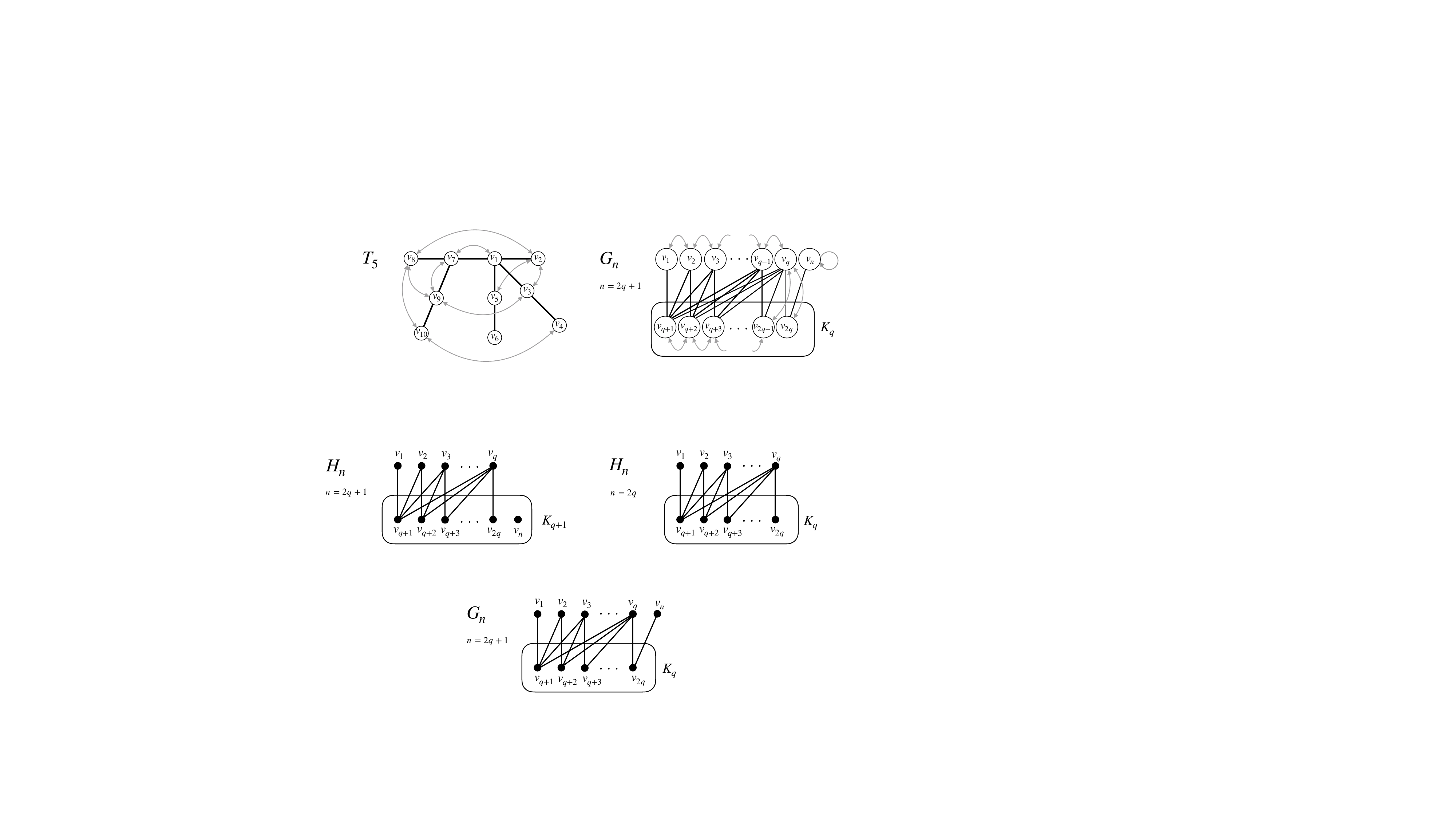}
	\caption{\small Graph $T_5$ with the transitive action on $[10]\setminus \{6\}$.}
		\label{fig:T5_tr_action}
	\end{center}
\end{figure}


\section{Extremal  global amoebas with respect to size, chromatic number and clique number}\label{sec:e(G)X(G)w(G)}

We denote by $e(G)$, $\chi(G)$ and $\omega(G)$ the \emph{size} (number of edges), the \emph{chromatic number} (smallest number of colors in a proper vertex coloring) and the \emph{clique number} (order of a maximum clique) of $G$ respectively.

We shall note that the degrees constraint established in Proposition~\ref{prop:la_degrees} compromises the number of edges that a global amoeba or a local amoeba with small minimum degree can have. In this section, we will show that a graph of order $n$ that is  a global amoeba cannot have more than $\lfloor \frac{n^2}{4}\rfloor$ edges. Interestingly, it turns out that this bound is sharp. We will also prove that the chromatic number, and thus the clique number, of a global amoeba of order $n$ can not be greater than $\lfloor \frac{n}{2} \rfloor+1$. Again, this upper bound is sharp and we will prove that it is reached when having the maximum possible number of edges.

The family of graphs that proves the sharpness in the upper bounds mentioned in the previous paragraph is $H_n$, which was given in Example \ref{ex:Hn} as the graph of order $n$ with $V(H_n) = A \cup B$ such that, taking $q=\lfloor \frac{n}{2} \rfloor$,  $A = \{v_1, v_2, \ldots, v_{q}\}$ and $B = \{v_{q+1}, v_{q+2}, \ldots, v_{q+{\lceil \frac{n}{2} \rceil}}\}$, where $B$ is a clique, $A$ is an independent set and adjacencies  between $A$ and $B$ are given by $v_i v_{q+j}\in E(H_n)$ if and only if $j \le i$, where $1\le i\le q$ and $1\le j\le \lceil \frac{n}{2} \rceil$. It was shown in the mentioned example that $H_n$ is both a global and a local amoeba. It is also very simple to note that $e(H_n) = \lfloor \frac{n^2}{4}\rfloor$ and  $\omega(H_n) = \lfloor \frac{n}{2}\rfloor+1$. 

The next theorem gives upper bounds for the edge number $e(G)$, the chromatic number $\chi(G)$, and the clique number $\omega(G)$, of a global amoeba with minimum degree $1$. We will use the Powell-Welsh bound on the chromatic number of a graph $G$ \cite{PoWe} (see \cite{Bon69} for an alternative proof):
\begin{equation}\label{eq:Bon}
\chi(G) \le \max_{1 \le i \le n} \min\{d_i+1, i\}
\end{equation}
where $d_1 \ge d_2 \ge \ldots \ge d_n$ is the degree sequence of $G$.

\begin{theorem}\label{thm:GA_max_e(G)X(G)w(G)}
If $G$ is a global amoeba of order $n$ with minimum degree $\delta(G) = 1$, then 
\begin{itemize}
\item[(i)] $e(G) \le \lfloor \frac{n^2}{4}\rfloor$, and
\item[(ii)] $\omega(G) \le \chi(G) \le \lfloor \frac{n}{2}\rfloor+1$,
\end{itemize}
where all bounds are sharp. Moreover, we have the following relations concerning the equalities in the above bounds.
\begin{itemize}
\item[(iii)] If $e(G) = \lfloor \frac{n^2}{4}\rfloor$ then $\omega(G) = \chi(G) = \lfloor \frac{n}{2}\rfloor+1$, but the converse is not true.
\item[(iv)] We have $\omega(G) = \lfloor \frac{n}{2}\rfloor+1$ if and only if  $\chi(G) = \lfloor \frac{n}{2}\rfloor+1$.
\end{itemize}
\end{theorem}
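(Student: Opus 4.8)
The plan is to argue entirely at the level of the degree sequence $d_1 \ge d_2 \ge \cdots \ge d_n$ of $G$, using that, because $\delta(G)=1$, Proposition \ref{prop:ga_degrees} gives both that the degree set equals $[\Delta]$ (so consecutive sorted degrees differ by at most one) and the key inequality $d_i \le n+1-i$. Part (ii) is the quickest: I would substitute $d_i+1 \le n+2-i$ into the bound \eqref{eq:Bon}, obtaining $\chi(G) \le \max_{i} \min\{n+2-i,\,i\}$, and since $\min\{n+2-i,i\}$ is maximized where its two arguments meet, near $i=\tfrac n2+1$, this maximum equals $\lfloor \tfrac n2\rfloor+1$; together with $\omega(G)\le\chi(G)$ this yields (ii). Sharpness of (ii) (and, below, of (i)) is free from $H_n$ via Proposition \ref{prop:Hn_GA&LA}, whose clique $B$ together with $v_{\lfloor n/2\rfloor}$ realizes $\omega=\chi=\lfloor n/2\rfloor+1$.

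For (i) I would marry $d_i\le n+1-i$ to graphicality through a single use of the Erd\H{o}s--Gallai inequality at the Durfee threshold. Let $r$ be the largest index with $d_r\ge r$; then $d_i\le r$ for all $i>r$, and $d_r\ge r$ with $d_r\le n+1-r$ forces $r\le\frac{n+1}{2}$. Erd\H{o}s--Gallai at $k=r$ gives $\sum_{i\le r}d_i \le r(r-1)+\sum_{i>r}\min\{d_i,r\}=r(r-1)+\sum_{i>r}d_i$, so $2e(G)\le r(r-1)+2\sum_{i>r}d_i$. Bounding each $d_i$ ($i>r$) by $\min\{r,\,n+1-i\}$ and summing, the right-hand side collapses to $2r(n-r)$, whence $e(G)\le r(n-r)\le\lfloor n^2/4\rfloor$, the last step because $r\le\lceil n/2\rceil$ and $r(n-r)$ peaks at $r=\lfloor n/2\rfloor$.

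For (iii) I would read off the equality case of the previous paragraph. The equality $e(G)=\lfloor n^2/4\rfloor$ forces $r=\lfloor n/2\rfloor$ (or $r=\lceil n/2\rceil$ when $n$ is odd), equality in Erd\H{o}s--Gallai at $k=r$, and $d_i=\min\{r,n+1-i\}$ for $i>r$. Re-deriving Erd\H{o}s--Gallai by counting edges inside and leaving the top set $T=\{v_1,\dots,v_r\}$, equality means precisely that $T$ induces a clique $K_r$ and that every vertex outside $T$ sends all of its edges into $T$. Since $d_{r+1}=\min\{r,n-r\}=r$ (or, in the odd case $r=\lceil n/2\rceil$, the clique $T$ already has size $\lfloor n/2\rfloor+1$), the vertex $v_{r+1}$ is adjacent to all of $T$, producing $K_{\lfloor n/2\rfloor+1}$ and hence $\omega(G)\ge\lfloor n/2\rfloor+1$; with (ii) this gives $\omega=\chi=\lfloor n/2\rfloor+1$. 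The converse fails, and a clean witness is the graph $G_n$ of Example \ref{ex:amoebas}~3(c): it is a global amoeba with $\delta=1$ containing the clique of $H_{n-1}$, so $\omega=\chi=\lfloor n/2\rfloor+1$, yet $e(G_n)=e(H_{n-1})+1$ lies strictly below $\lfloor n^2/4\rfloor$.

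Finally, for (iv) the forward direction is immediate from (ii), since $\omega=\lfloor n/2\rfloor+1$ pushes $\chi$ up to the same ceiling. For the reverse direction I would pass to a colour-critical subgraph $H$ obtained by repeatedly discarding vertices of degree below $\lfloor n/2\rfloor$; this preserves $\chi(H)=\chi(G)=\lfloor n/2\rfloor+1$ and guarantees $\delta(H)\ge\lfloor n/2\rfloor$. Every vertex of $H$ then has $G$-degree $\ge\lfloor n/2\rfloor$, so by $d_i\le n+1-i$ there are at most $\lceil n/2\rceil+1$ such vertices; hence $|V(H)|\in\{\lfloor n/2\rfloor+1,\ \lfloor n/2\rfloor+2\}$. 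In the first case $\delta(H)\ge|V(H)|-1$ forces $H=K_{\lfloor n/2\rfloor+1}$; in the second, $\overline H$ has maximum degree at most one, so $H$ is a complete graph minus a matching, and $\chi(H)=\lfloor n/2\rfloor+1$ forces that matching to be a single edge, again leaving $K_{\lfloor n/2\rfloor+1}$. I expect this clique-extraction---converting extremality of $e$ or $\chi$ into an honest large clique---to be the main obstacle, since large chromatic (or edge) count does not in general yield a large clique; it is exactly the amoeba constraint $d_i\le n+1-i$ that confines the critical structure to few vertices and thereby makes it complete.
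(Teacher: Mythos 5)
Your proof is correct, and for parts (i)--(iii) it follows essentially the paper's own route: the paper also combines the amoeba degree bound $d_i \le n+1-i$ (Proposition \ref{prop:ga_degrees}) with an Erd\H{o}s--Gallai-type edge count at a threshold --- it splits at $\lceil n/2\rceil$ where you split at the Durfee index $r$ --- and its equality analysis in (iii) likewise forces the high-degree vertices to form a clique, the low-degree vertices to be independent, and one extra vertex to complete a $K_{\lfloor n/2\rfloor+1}$, with the same counterexample $G_n$ for the converse; part (ii) is the same Welsh--Powell computation via (\ref{eq:Bon}), and sharpness comes from $H_n$ in both treatments. The genuine divergence is the reverse implication of (iv). The paper argues there by explicit coloring: letting $L$ be the vertices of degree at least $\lfloor n/2\rfloor$ (so $|L| \le \lceil n/2\rceil+1$), it shows that if $L$ is not a clique one can properly color $L$ with $\lfloor n/2\rfloor$ colors and extend greedily to the remaining low-degree vertices, contradicting $\chi(G)=\lfloor n/2\rfloor+1$; the odd case requires a separate, more delicate argument around a special vertex of degree $\frac{n-1}{2}$. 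You instead extract a $(\lfloor n/2\rfloor+1)$-critical subgraph $H$ with $\delta(H)\ge\lfloor n/2\rfloor$, observe $V(H)\subseteq L$ so that $|V(H)|\in\{\lfloor n/2\rfloor+1,\lfloor n/2\rfloor+2\}$, and classify: $H$ is complete, or complete minus a matching whose chromatic number pins the matching to a single edge; either way $K_{\lfloor n/2\rfloor+1}\subseteq H \subseteq G$. This buys a cleaner argument: both parities are handled uniformly, the paper's ad hoc greedy colorings are replaced by the standard fact that a $k$-chromatic graph has a subgraph of minimum degree $k-1$, and the fussy odd-$n$ case disappears. Both proofs ultimately rest on the same two pillars --- the amoeba constraint $d_i\le n+1-i$ confining the high-degree set, and a clique-versus-coloring dichotomy on that set --- so they are equivalent in strength; yours is the tidier write-up of part (iv).
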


\begin{proof}
Let $d_1 \ge d_2 \ge \ldots \ge d_n =1$ be the degree sequence of a global amoeba $G$, and let $D = \{d_i \;|\; i \in [n] \}$. By Proposition \ref{prop:la_degrees}, we know that $D = [d_1]$ where $d_1 \in [n-1]$ and that, for every $i\in[n]$, 
\begin{equation}\label{eq:d_i}
d_i\leq n+1-i.
\end{equation}
Now we will prove the four items separately.\\

\noindent
(i) By inequality (\ref{eq:d_i}),  the sum of the $\lfloor \frac{n}{2}\rfloor$ smallest degrees satisfies
\begin{equation}\label{eq:sum}
\sum_{i=\lceil \frac{n}{2}\rceil+1}^n d_i \le \sum_{i=\lceil \frac{n}{2}\rceil+1}^n  n+1-i =  \sum_{i=1}^{\lfloor \frac{n}{2}\rfloor} i = \frac{1}{2} \left\lfloor \frac{n}{2}\right\rfloor\left(\left\lfloor \frac{n}{2}\right\rfloor+1\right).
\end{equation}

Let $L$ be the set of the $\lceil \frac{n}{2}\rceil$ vertices having the largest degrees (corresponding to the degrees $d_1, d_2, \ldots, d_{\lceil \frac{n}{2}\rceil}$), and let $S = V(G) \setminus L$. Denote by $e(L)$ the number of edges induced by the vertices in $L$ and by $e(L,S)$ the number of edges between $L$ and $S$. Then we have
\begin{equation}\label{eq:ErdGal2}
\sum_{i=1}^{\lceil \frac{n}{2}\rceil}d_i  = 2 e(L) + e(L, S) \leq \left\lceil \frac{n}{2}\right\rceil\left(\left\lceil \frac{n}{2}\right\rceil-1\right) +\sum_{i=\lceil \frac{n}{2}\rceil+1}^n d_i.
\end{equation}
Hence, inequalities (\ref{eq:sum}) and (\ref{eq:ErdGal2}) yield
\begin{align*}
2e(G) = \sum_{i=1}^{n}d_i & \leq \left\lceil \frac{n}{2}\right\rceil\left(\left\lceil \frac{n}{2}\right\rceil-1\right) +2 \sum_{i=\lceil \frac{n}{2}\rceil+1}^n d_i \\
&\leq \left\lceil \frac{n}{2}\right\rceil\left(\left\lceil \frac{n}{2}\right\rceil-1\right) +\left\lfloor \frac{n}{2}\right\rfloor\left(\left\lfloor \frac{n}{2}\right\rfloor+1\right)=\left\lfloor \frac{n^2}{2}\right\rfloor,
\end{align*}
and the bound follows because $\frac{1}{2}\lfloor \frac{n^2}{2}\rfloor =\lfloor \frac{n^2}{4}\rfloor$. Observe that the bound is attained for the graph $H_n$ shown in Example \ref{ex:Hn} to be global amoeba.\\

\noindent(ii)  For every $\left\lfloor \frac{n}{2} \right\rfloor+2\leq i\leq n$, we have, using (\ref{eq:d_i}) with $i = \left\lfloor \frac{n}{2}\right\rfloor+2$, that
$$\min \{d_i +1,i\}\le d_i+1\leq d_{\left\lfloor \frac{n}{2}\right\rfloor+2}+1\leq \left\lceil \frac{n}{2} \right\rceil\leq  \left\lfloor \frac{n}{2} \right\rfloor+1.$$
For the remaining cases $1\leq i\leq \left\lfloor \frac{n}{2} \right\rfloor+1$, we obtain as well
$$\min \{d_i +1,i\}\leq i\leq \left\lfloor \frac{n}{2} \right\rfloor+1.$$ 
Altogether it follows with (\ref{eq:Bon}), that $\chi(G) \le \lfloor \frac{n}{2}\rfloor+1$.
The trivial inequality $\omega(G) \le \chi(G)$ yields the result. To show that the bound is sharp, consider again the graph $H_n$, that is a global amoeba by Example \ref{ex:Hn}, and that satisfies $\omega(H_n) = \chi(H_n) = \lfloor \frac{n}{2} \rfloor + 1$.\\

\noindent
(iii) Observe now that a global amoeba $G$ with degree sequence $d_1 \ge d_2 \ge \ldots \ge d_n=1$ satisfies $e(G)=\lfloor \frac{n^2}{4}\rfloor$ if and only if   equalities in (\ref{eq:sum}) and (\ref{eq:ErdGal2})  hold. This happens if and only if, on the one hand, the smallest degrees $1, 2, \ldots, \lfloor\frac{n}{2} \rfloor -1$ appear each one once (while the degree $\lfloor\frac{n}{2} \rfloor$ appears at least once) and, on the other hand, the sum of the degrees of the  $\lceil \frac{n}{2} \rceil$ vertices having the largest degrees is exactly $\left\lceil \frac{n}{2}\right\rceil\left(\left\lceil \frac{n}{2}\right\rceil-1\right) +\sum_{i=\lceil \frac{n}{2}\rceil+1}^n d_i$, meaning that they form a clique and that the complementary set (i.e. the $\lfloor \frac{n}{2} \rfloor$ vertices of the smallest degrees) is edge-less. From here, it is easy to see that $\omega(G) = \lfloor \frac{n}{2}\rfloor +1$. Thus, by item $(ii)$, also $\chi(G) = \lfloor \frac{n}{2}\rfloor +1$. To see that the converse is not true, take the graph $G_n$ defined before Example \ref{ex:Gn-En} that is a global amoeba with minimum degree $1$, and $\omega(G) = \chi(G) = \lfloor \frac{n}{2}\rfloor +1$, but has $e(G_n)   < \lfloor \frac{n^2}{4}\rfloor$ for $n \ge 4$. \\

\noindent
(iv) The necessity part is clear because of item (ii). For the converse, suppose that $\chi(G) = \lfloor \frac{n}{2}\rfloor+1$. Let $L$ be the set of all vertices of degree at least $\lfloor \frac{n}{2}\rfloor$. Since $V(G) \setminus L$ contains all vertices of degree at most $\lfloor \frac{n}{2}\rfloor -1$, it follows by Proposition \ref{prop:la_degrees} that
\[\left\lfloor \frac{n}{2}\right\rfloor -1 \le |V(G) \setminus L| = n - |L|.\]
Hence, we obtain that $|L|  \le \lceil \frac{n}{2}\rceil + 1$. 

We assume first that $n$ is even and we suppose for a contradiction that $L$ is not a clique. Then we can color the vertices of $L$ with $|L|-1 = \frac{n}{2}$ different colors such that there are no adjacent vertices with the same color.  Since the vertices in $V(G) \setminus L$ have degree not larger than  $\frac{n}{2} -1$, we can proceed coloring the vertices of $V(G) \setminus L$ one after the other by taking always one of the colors that is not already taken by one of its neighbors. In this way, we use at most $\frac{n}{2}$ colors and there are no two adjacent vertices with the same color, implying the contradiction $\chi(G) \le \frac{n}{2}$. Hence, $L$ has to be a clique and it follows that $\omega(G) = \frac{n+1}{2}$.

Let now $n$ be odd. Let $v \in V(G) \setminus L$ be the vertex of degree $\frac{n-1}{2}$. If $|N(v) \cap L| = \frac{n-1}{2}$ and $N(v) \cap L$ is a clique, we have finished because then $N[v]$ is a clique and so $\omega(G) = \frac{n+1}{2} = \lfloor \frac{n}{2}\rfloor +1$. Hence, we may assume that $N(v) \cap L$ is not a clique or that $|N(v) \cap L| \le \frac{n-3}{2}$. In both cases we can color the vertices of $L$ with $\frac{n-1}{2}$ different colors in such a way that there is no adjacent pair with the same color but taking also care that there are no more than $\frac{n-3}{2}$ colors in $N(v) \cap L$. Now we can color $v$ using a color that has been used in $L \setminus N(v)$. The remaining vertices have degree at most $\frac{n-3}{2}$, so that we can proceed in a greedy way as in previous case using no more than $\frac{n-1}{2}$ colors in total. Hence, it follows that $\chi(G) \le \frac{n-1}{2} = \lfloor \frac{n}{2}\rfloor$, a contradiction.
\end{proof}

The search for the extremal family in the bound of item (i) of Theorem \ref{thm:GA_max_e(G)X(G)w(G)} requires a much more detailed analysis  that takes into account, not only the degree sequence, but the inner structure of a global amoeba. We already know that, if the graph has maximum degree $n-1$, the only extremal graph is $H_n$ (see proof of Example  \ref{ex:Hn}). However, if the maximum degree is smaller, there may be different possibilities for the repetitions among the higher degrees. Still, we believe that the only possible graph attaining equality here is $H_n$ (see Conjecture~\ref{conj:max_e(G)}).

We finish this section with a simple upper bound on the maximum degree of a global amoeba with minimum degree $1$.
\begin{proposition}\label{prop:max_deg}
Let $G$ be a global amoeba on $n$ vertices and $m$ edges such that $\delta(G) = 1$. Then  
\[\Delta(G) \le \frac{1}{2} \left(1 + \sqrt{1- 8n + 16 m}\right) < 1 + 2\sqrt{m},\]
and the left inequality is sharp. 
\end{proposition}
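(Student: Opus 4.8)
The plan is to read everything off the degree sequence. Since $G$ is a global amoeba with $\delta(G)=1$, Proposition~\ref{prop:ga_degrees}(i) forces the degree \emph{set} to be $D=[\Delta]$, where $\Delta=\Delta(G)$; in particular each of the values $1,2,\ldots,\Delta$ is realized by some vertex. First I would bound the degree sum below by the sparsest sequence compatible with this: keep one vertex of each degree $2,3,\ldots,\Delta$ and let the remaining $n-\Delta+1$ vertices carry the minimum degree $1$ (legitimate since $\delta=1$). This gives
\[
2m=\sum_{v\in V(G)}\deg_G(v)\ \ge\ (n-\Delta+1)+\sum_{k=2}^{\Delta}k=\frac{\Delta(\Delta-1)}{2}+n,
\]
equivalently $\Delta^2-\Delta+2n-4m\le 0$. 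Solving this quadratic in $\Delta$ and retaining the positive root yields $\Delta\le\frac{1}{2}\bigl(1+\sqrt{1-8n+16m}\bigr)$, the left-hand bound (the discriminant $1-8n+16m$ is automatically nonnegative, since the real value $\Delta\ge1$ satisfies the inequality). For the strict right-hand inequality I would simply note that $n\ge 2$ forces $1-8n<0$, so $1-8n+16m<16m$ and $\sqrt{1-8n+16m}<4\sqrt{m}$; hence $\frac{1}{2}\bigl(1+\sqrt{1-8n+16m}\bigr)<\frac{1}{2}+2\sqrt{m}<1+2\sqrt{m}$.

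It remains to prove sharpness of the left bound, which is the substantive part. Equality in the chain above holds precisely when the degree sequence of $G$ is the minimal one: each of $2,3,\ldots,\Delta$ exactly once, together with $n-\Delta+1$ vertices of degree $1$. So the task reduces to producing, for each $\Delta$, a global amoeba with this exact degree sequence. My plan uses two ingredients. The base construction is an explicit tree $T^{(\Delta)}$ carrying a single vertex of each degree $2,\ldots,\Delta$ and leaves otherwise (for $\Delta=1,2,3$ these are $K_2$, $P_3$, and the $5$-vertex spider with degrees $1,1,1,2,3$; the last one is a global amoeba by Corollary~\ref{rem:DownDegrees}, using the feasible edge-replacements that realize the permutations $(2\,3)$ and $(1\,2)(4\,5)$). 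I would then show $T^{(\Delta)}$ is a global amoeba by induction on $\Delta$ via the orbit criterion of Theorem~\ref{thm:eq}(ii), assembling $T^{(\Delta)}$ from $T^{(\Delta-1)}$ and transporting feasible edge-replacements with Lemmas~\ref{la:subgraphs} and~\ref{la:expansion}, in the same spirit as the Fibonacci-tree argument. The second ingredient covers all remaining orders: since each $T^{(\Delta)}$ and each $K_2=P_2$ is a global amoeba, Proposition~\ref{prop:HUH'isGA} shows that $T^{(\Delta)}\cup tK_2$ is a global amoeba, and each adjoined $K_2$ contributes exactly two degree-$1$ vertices and one edge, so the equality $2m=\tfrac{\Delta(\Delta-1)}{2}+n$ is preserved.

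The main obstacle is the base construction itself: one must pin down a concrete family $T^{(\Delta)}$ realizing the minimal degree sequence and prove it is a global amoeba, because an arbitrary tree with that degree sequence need not be an amoeba. The accompanying transitivity bookkeeping (verifying that the stabilizer of the unique maximum-degree vertex already acts transitively on the remaining vertices, and then adjoining one further permutation to move that vertex) is routine in spirit but must be arranged carefully, exactly as in the proof that the Fibonacci trees are global amoebas. Everything else in the proposition is immediate from Proposition~\ref{prop:ga_degrees}.
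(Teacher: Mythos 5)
Your derivation of both inequalities is correct and is essentially the paper's own argument: Proposition \ref{prop:ga_degrees} forces one vertex of each degree $2,\ldots,\Delta$ and degree at least $1$ elsewhere, giving $4m \ge \Delta^2-\Delta+2n$, and solving the quadratic together with the observation $1-8n<0$ yields both bounds. The genuine gap is in the sharpness claim, which you yourself identify as the substantive part. The paper certifies sharpness with the disconnected star forest $K_{1,2}\cup K_{1,3}\cup \cdots \cup K_{1,\Delta}$: it has exactly the extremal degree sequence (one vertex of each degree $2,\ldots,\Delta$, all other vertices of degree $1$), and it is a global amoeba by iterating Theorem \ref{thm:unions_pm_edges}(i), since each star $K_{1,k+1}$ is a copy of the already-present component $K_{1,k}$, together with an isolated vertex, plus one edge (the bookkeeping with isolated vertices costs nothing by Theorem \ref{thm:eq} and Corollary \ref{cor:GUtK_1}). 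You instead insist on a \emph{connected} witness: a tree $T^{(\Delta)}$ with the extremal degree sequence. You verify this only up to $\Delta=3$ and defer the general case to an induction ``in the same spirit as the Fibonacci-tree argument,'' which is never carried out --- and there is good reason to doubt it is routine. The Fibonacci trees have $2F_i$ vertices against maximum degree $i-1$ (exponentially many leaves), and it is precisely their self-similar copy structure that makes Lemmas \ref{la:subgraphs} and \ref{la:expansion} applicable; your $T^{(\Delta)}$ would have only $\frac{\Delta(\Delta-1)}{2}+2$ vertices, i.e.\ $\Delta=\Theta(\sqrt{n})$, and admits no analogous decomposition. In fact the paper states, immediately after Proposition \ref{prop:max_deg}, that it does not know of any connected global amoeba attaining the bound, and it poses the existence of global amoeba trees with $\Delta=\Theta(\sqrt{n})$ as an open problem (Problem \ref{prob:max_deg}). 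So your sharpness argument rests on an unproved claim that the authors regard as open.

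The repair is immediate: sharpness only requires \emph{some} extremal global amoeba, so drop connectivity and take the star forest above. Your second ingredient then applies verbatim: adjoining copies of $K_2$ via Proposition \ref{prop:HUH'isGA} adds two degree-one vertices and one edge, hence preserves the equality $4m=\Delta^2-\Delta+2n$, producing extremal examples for infinitely many orders.
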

\begin{proof}
Let $\Delta = \Delta(G)$. By Proposition \ref{prop:la_degrees}, we deduce that $2m \ge (n - \Delta)+\sum_{i = 1}^\Delta i$,
which gives $4m \ge \Delta^2 - \Delta + 2n$. Solving the quadratic inequality, the bound 
\[\Delta(G) \le \frac{1}{2} \left(1 + \sqrt{1- 8n + 16 m}\right)\] 
follows. The bound is sharp for the star forest $K_{1,2} \cup K_{1,3} \cup \ldots \cup K_{1, \Delta}$, which can be shown to be a global amoeba by means of Proposition \ref{prop:unions_pm_edges}. Finally, the inequality \[\frac{1}{2} \left(1+\sqrt{1-8n+16 m} \right) < 1 + 2\sqrt{m}\] is easy to verify. 
\end{proof}
It is easy to construct graphs, in particular acyclic graphs, satisfying equality in Proposition \ref{prop:max_deg}, namely having $n - \Delta +1$ vertices of degree $1$ and the remaining vertices having degree $2, 3, \ldots, \Delta$. However, to find constructions of such graphs which are also amoebas is much harder. We leave as an open problem to characterize the family of global amoebas that attains this bound (see Problem \ref{prob:max_deg} in Section \ref{sec:problems}).

 \section{Basic problems about amoebas}\label{sec:problems}
 
 In this section, we discuss some problems that arise naturally from the concepts of global and local amoebas and the theory developed in this paper.

One of our main interests is to find more families of local and global amoebas as well as to develop more methods to construct them. Observe that, besides the Fibonacci-amoeba trees given in Section \ref{sec:Fibo-trees}, all constructions of global amoebas provided in Section \ref{sec:constr} yield disconnected graphs. In \cite{CHM_rec}, a way of recursively constructing global amoebas is developed, and it is also shown how it can be used to construct the Fibonacci-amoeba trees. This method is interesting but it yields graphs that have many cut vertices. So it would be nice to find other constructions that give rise to global amoebas with higher connectivity. It would be also interesting to know if there are local or global amoebas with all possible edge numbers. 

\begin{problem}\mbox{}
\begin{enumerate} 
\item[(i)] Find other families of global and/or local amoebas. In particular, find other infinite families of connected global amoebas.
\item[(ii)] Is there a global amoeba on $n$ vertices and $m$ edges for every $m$ with $0 \le m \le \lfloor \frac{n^2}{4} \rfloor$?
\item[(iii)] Is there a local amoeba on $n$ vertices and $m$ edges for every $m$ with $0 \le m \le {n \choose 2}$?
\end{enumerate}
\end{problem}

Of course, the recognition problem and its complexity  should be studied. 
To determine if a graph is a local or a global amoeba, one first has to determine which are its feasible edge-replacements, a problem that involves checking if two graphs are isomorphic. The isomorphism problem in graphs has been intensively studied. The best currently accepted theoretical algorithm is due to Babai and Luks \cite{BaLu83}, which has a running time of $2^{\mathcal{O}(\sqrt{n \log n})}$ for a graph on $n$ vertices. A quasi-polynomial time algorithm was announced by Babai in 2015 \cite{Bab15}, but its proof is not yet fully peer-reviewed, see \cite{Bab17}. However, there are many graphs classes in which the isomorphism problem is polynomial \cite{McK, McKPi}. The difference in checking if a graph $G$ of order $n$ is a local or a global amoeba lies on checking if the group $S_G$ is isomorphic to the symmetric group $S_n$, or if $S_{G^*}$ acts transitively on $[n+1]$, where $G^* = G \cup K_1$ (see Theorem \ref{thm:eq}). Both things can be computed in $O(|S| n)$-time, given that $S$ is a set of generators (see \cite{KaOs, Ser}). In \cite{GGM}, among other results, a public repository containing several programs to detect  both types of amoebas is presented, see \cite{GGM_rep}, where a collection of results for sets of non-isomorphic graphs of order up to $10$, as well as non-isomorphic trees of order up to $22$ are shown.

\begin{problem}
What is the computational complexity of determining if a graph G is a global and/or local amoeba?
\end{problem}

A structural characterization of the graphs that are global but not local amoebas or of those that are local but not global, or of those that are both, that could give clues on how they can be constructed or recognized may be an interesting problem.

\begin{problem}
Provide a structural characterization of the following graph families.
\item[(i)] Global amoebas that are not local amoebas.
\item[(i)] Local amoebas that are not global amoebas.
\item[(i)] Graphs that are both, global and local amoebas.
\end{problem}

However, as the above problem could be challenging in general, it could be more doable if restricted to a particular class of graphs. In this line, we have studied the Fibonacci-trees $T_i$ in Section \ref{sec:Fibo-trees} and we have shown that they are global amoebas. We also have shown at the end of Section \ref{sec:Fibo-trees} that $T_5$ is a local amoeba, too, and, while analogous arguments work for $i \le 4$, it is not clear how to proceed for $i \ge 6$.
\begin{problem}\label{probl:Ti_LA}
Which trees are local/global amoebas? Is the Fibonacci-tree $T_i$ a local amoeba for all $i \ge 1$?
\end{problem}

The graph $H_n$ given in Example \ref{ex:Hn} is shown in Theorem \ref{thm:GA_max_e(G)X(G)w(G)}(i) to have the largest density among the global amoebas of minimum degree $1$. We believe this is the family that characterizes the equality.  We state this as a conjecture. 

\begin{conj}\label{conj:max_e(G)}
If $G$ is a global amoeba of order $n$ and minimum degree~$1$, then $e(G) = \lfloor \frac{n^2}{4} \rfloor$ if and only if $G \cong H_n$. 
\end{conj}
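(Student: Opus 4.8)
The plan is to prove only the nontrivial ``only if'' direction, the converse being exactly Proposition~\ref{prop:Hn_GA&LA}. So let $G$ be a global amoeba of order $n$ with $\delta(G)=1$ and $e(G)=\lfloor n^2/4\rfloor$, and write $q=\lfloor n/2\rfloor$, $p=\lceil n/2\rceil$. First I would extract the rigid structure that is already implicit in the proof of Theorem~\ref{thm:GA_max_e(G)X(G)w(G)}(iii): equality in the size bound forces the $q$ smallest degrees to be exactly $1,2,\dots,q$, the set $S$ of those $q$ vertices to be independent, the set $L$ of the $p$ largest-degree vertices to induce a clique, and every edge incident with $S$ to run into $L$. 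Thus $G$ is a \emph{split} graph with split partition $(L,S)$, where $S=\{s_1,\dots,s_q\}$ with $\deg_G(s_i)=i$ and all neighbours of $s_i$ lie in $L$.

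Next I would record the purely combinatorial fact that $G\cong H_n$ exactly when the traces of $S$ on $L$ are linearly nested: if $N(s_1)\cap L\subsetneq N(s_2)\cap L\subsetneq\cdots\subsetneq N(s_q)\cap L$, then the clique vertex meeting $s_i,s_{i+1},\dots,s_q$ and no smaller index reproduces precisely the adjacency rule defining $H_n$ (the possible extra clique vertex when $n$ is odd simply receives no $S$-neighbour). Since a split graph is a threshold graph iff it contains no induced $P_4$ (a $P_4$-free split graph is automatically $C_5$-free), and $H_n\cong\overline{H_{n-1}\cup K_1}$ is a threshold graph, it therefore suffices to show that an extremal global amoeba contains no induced $P_4$; equivalently, that $\Delta(G)=n-1$, for then the degree set equals $[n-1]$ and $G\cong H_n$ by the characterization recalled just before Proposition~\ref{prop:Hn_GA&LA}.

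The heart of the argument is to rule out non-nested traces by means of the amoeba property. If the traces are not nested, then, since the $|N(s_i)\cap L|=i$ are distinct, some pair $N(s_a)\cap L$, $N(s_b)\cap L$ is incomparable, so there are clique vertices $\ell_1,\ell_2$ with $\ell_1\in N(s_a)\setminus N(s_b)$ and $\ell_2\in N(s_b)\setminus N(s_a)$, giving an induced $P_4$ on $s_a\ell_1\ell_2 s_b$. The plan is to convert this into a violation of Corollary~\ref{rem:DownDegrees} (or of Theorem~\ref{thm:eq}(ii)) by exhibiting a vertex $x$ with $\deg_G(v_x)\ge 2$ whose $S_G$-orbit contains no vertex of degree $\deg_G(v_x)-1$. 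The key tool is the identity $\deg_{G_\sigma}(v_i)=\deg_G(v_{\sigma(i)})$ together with Remark~\ref{rem:degrees_copy}: a single feasible edge-replacement $\sigma$ moves every index to one whose $G$-degree differs by at most one, so an orbit can only descend one degree-level per replacement, and each descent must be realised by an actual feasible replacement that simultaneously restores the whole degree sequence. In the split setting this repair is severely constrained, because deleting an edge at a vertex of $S$ forces an addition that, with $S$ independent and $L$ complete, cannot recreate the exact multiset $\{1,\dots,q\}$ on $S$ unless the traces were already nested; carrying out this bookkeeping should show that the incomparable pair $\ell_1,\ell_2$ blocks the descent of a suitable middle-degree vertex, so that its orbit never reaches degree $1$.

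The main obstacle is exactly this last step: making the ``repair is impossible'' bookkeeping work for all $n$ rather than in small cases. A single forbidden $P_4$ need not trap a vertex by itself, since there are many degree-$\ge 2$ vertices and long chains of feasible replacements to track, and a purely local obstruction can dissolve after a few swaps (small non-nested split graphs already admit several nontrivial feasible replacements while their clique vertices still form an orbit that avoids every degree-$1$ vertex). I would therefore aim for a \emph{global} invariant of the split partition that is preserved by every feasible edge-replacement and that separates the nested configuration from all others --- for instance the sorted vector $(|N(\ell)\cap S|)_{\ell\in L}$ or, more intrinsically, the bipartite trace-type --- and try to prove that this invariant is $S_G$-invariant and attains an amoeba-compatible value only for the threshold graph. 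Proving the invariance and the uniqueness of the compatible value at once is, I expect, the crux, and the reason the statement is left as a conjecture here; a clean induction built on the recursion $G\mapsto\overline{G\cup K_1}$ combined with peeling off the unique degree-$1$ vertex seems the most promising route to it.
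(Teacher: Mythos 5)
You should first be aware that the statement you were asked to prove is stated in the paper as a \emph{conjecture}: the authors explicitly leave it open, noting only that the case $\Delta(G)=n-1$ follows from the result of Behzad and Chartrand cited before Proposition~\ref{prop:Hn_GA&LA}, and that the difficulty lies precisely in the case $\Delta(G)<n-1$. So there is no proof in the paper to compare against, and your attempt must stand on its own. The parts of your proposal that are genuinely established are the reductions: extracting from the equality analysis in Theorem~\ref{thm:GA_max_e(G)X(G)w(G)}(iii) that an extremal $G$ is a split graph with $S=\{s_1,\dots,s_q\}$ independent, $\deg_G(s_i)=i$, all $S$-neighbours in the clique $L$; the observation that $G\cong H_n$ exactly when the traces $N(s_i)\cap L$ are nested (equivalently $G$ is threshold, equivalently $G$ has no induced $P_4$, which with $\delta(G)=1$ forces $\Delta(G)=n-1$ and then uniqueness via \cite{BeCh} and Proposition~\ref{prop:ga_degrees}). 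These steps are sound.

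However, the proposal has a genuine gap at exactly the point where the conjecture becomes nontrivial: you never actually show that a non-nested extremal split graph cannot be a global amoeba. Your plan is to contradict Corollary~\ref{rem:DownDegrees} by arguing that an incomparable pair of traces ``blocks the descent'' of some vertex's orbit, but no such vertex is exhibited, and the ``repair is impossible'' bookkeeping is not carried out --- indeed you concede that a single induced $P_4$ need not trap any vertex, since orbits are generated by arbitrarily long chains of feasible replacements, each of which only needs to return \emph{some} isomorphic copy, not preserve the split partition pointwise. The fallback --- a global $S_G$-invariant such as the sorted vector $\bigl(|N(\ell)\cap S|\bigr)_{\ell\in L}$ --- is likewise only named, not defined as a function on copies $G_\sigma$, not proven invariant under feasible edge-replacements (which is doubtful as stated, since a feasible replacement can move an edge between the $L$-side and the $S$-side while re-sorting degrees), and not shown to separate $H_n$ from all non-nested configurations. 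So what you have is a correct reduction of the conjecture to the statement ``no non-nested extremal split graph is a global amoeba,'' together with an honest acknowledgement that this core statement remains unproven; as a proof of the conjecture it is incomplete, which is consistent with the authors leaving it as an open problem.
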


For the bound on the chromatic and the clique numbers given in Theorem \ref{thm:GA_max_e(G)X(G)w(G)}(ii), where $H_n$ is also an example for their sharpness, a characterization of the graphs attaining equality would be interesting as well.

\begin{problem}
Characterize the families of global amoebas $G$ of order $n$ and minimum degree $1$ with $\chi(G) = \lfloor \frac{n}{2}\rfloor +1$ (and, hence, $\omega(G) = \lfloor \frac{n}{2}\rfloor +1$ by Theorem \ref{thm:GA_max_e(G)X(G)w(G)}(iv)).
\end{problem}

The graph $H_n$ is also an example of a global amoeba with the largest possible maximum degree, namely $n-1$. We also have shown in Proposition \ref{prop:max_deg} that the maximum degree of a global amoeba with minimum degree $1$ and with $m$ edges is at most $\frac{1}{2} \left(1 + \sqrt{1- 8n + 16 m}\right)$, and the bound is attained for the star forest $K_{1,2} \cup K_{1,3} \cup \dots \cup K_{1, \Delta}$. However, we do not know about connected global amoebas attaining the bound. In particular, it is intriguing to discover what is the maximum possible degree of a global amoeba tree. We recall at this point that, for the Fibonacci-tree family $T_i$, $i \ge 1$, that we discussed in Section \ref{sec:Fibo-trees}, the growing rate of the maximum degree of $T_i$ is logarithmical with respect to its order, but it could be that there are global amoeba trees where the behavior between maximum degree and order is not that drastic and comes rather closer to $\sqrt{n}$.
\begin{problem}\label{prob:max_deg}
Let $\mathcal{G}_n$ be the family of global amoebas of order $n$ and minimum degree $1$.
\begin{enumerate}
\item[(i)] Characterize the family of all graphs $G \in \mathcal{G}_n$ such that $\Delta(G) = \frac{1}{2} \left(1 + \sqrt{1- 8n + 16 m}\right)$.
\item[(ii)] Determine $f(\mathcal{F}_n) = \max \{ \Delta(F) \;|\; F \in \mathcal{F}_n\}$ for different families $\mathcal{F}_n \subseteq \mathcal{G}_n$, like trees, bipartite graphs, connected graphs, etc. 
\item[(iii)] In particular for the case of the family $\mathcal{T}_n$ of trees on $n$ vertices: is $f(\mathcal{T}_n) = \Theta(\sqrt{n})$?
\end{enumerate}
\end{problem}

\section{Appendix: theoretical setting}\label{sec:theor_set}

The following lemma shows that the application of feasible edge-replacements on any copy
$G_{\rho}$ of a graph $G$ leads to a copy $G_{\sigma\, \rho}$, where $\sigma$ is a permutation associated to the performed edge-replacement. 

\begin{lemma}\label{la:properties_FER}
Let $G$ be a graph provided with a labeling $V(G) \mapsto X$ on its vertices. Then, for any $rs \to kl \in R_G^*$,  $\sigma \in \fer_G(rs \to kl) $ and $\rho \in S_X$,
\[G_{\sigma	\, \rho} = G_{\rho} - e +e',\]
where $e = v_{\rho^{-1}(r)}v_{\rho^{-1}(s)}$ and  $e' = v_{\rho^{-1}(k)}v_{\rho^{-1}(l)}$.
\end{lemma}

\begin{proof}
Since $\sigma \in \fer_G(rs \to kl)$, we have
\[ E(G_{\sigma}) = \left(E(G) \setminus \{v_rv_s\} \right) \cup \{v_kv_l\}  
=  \left\{v_iv_j \;|\; ij \in (L_G \setminus \{rs\}) \cup \{kl\} \right\}. \]
On the other side, it can be seen easily that
\[ E(G_{\sigma}) = \{v_iv_j \;|\; \sigma(i) \sigma(j) \in L_G\} = \{v_{\sigma^{-1}(i)}v_{\sigma^{-1}(j)} \;|\; ij \in L_G\}. \]
Thus, we can infer that 
\[\left\{\{i,j\} \;|\; ij \in (L_G \setminus \{rs\}) \cup \{kl\} \right\} 
= \left\{\{\sigma^{-1}(i),\sigma^{-1}(j)\} \;|\; ij \in L_G \right\}.\]
\noindent
Having this, the following equality chain is straightforward.
\begin{align*}
    E(G_{\sigma\rho}) & = \{v_iv_j \;|\; \sigma\rho(i)\sigma\rho(j) \in L_G\}\\
                        & = \left\{v_{\rho^{-1}(\sigma^{-1}(i))}v_{\rho^{-1}(\sigma^{-1}(j))} \;|\; ij \in L_G \right\}\\
                        & = \left\{v_{\rho^{-1}(i)}v_{\rho^{-1}(j)} \;|\; ij \in (L_G \setminus \{rs\}) \cup \{kl\}  \right\}\\
                         & = \left( \{v_{\rho^{-1}(i)}v_{\rho^{-1}(j)} \;|\; ij \in L_G\}  \,\setminus\, \{v_{\rho^{-1}(r)}v_{\rho^{-1}(s)} \} \right) \cup \{v_{\rho^{-1}(k)}v_{\rho^{-1}(l)}\} \\
                        & = \left( \{v_iv_j \;|\; \rho(i)\rho(j) \in L_G\}  \,\setminus\, \{v_{\rho^{-1}(r)}v_{\rho^{-1}(s)} \} \right) \cup \{v_{\rho^{-1}(k)}v_{\rho^{-1}(l)}\} \\
                        & = E(G_{\rho} - e + e').
\end{align*}
Therefore, $G_{\sigma	\, \rho} = G_{\rho} - e +e'$, as claimed.
\end{proof}

In the next lemma, we establish important facts related to the feasible edge-replacements in non-connected graphs. 

\begin{lemma}\label{la:HUH'}
Let $G = H \cup H'$ be the disjoint union of two graphs $H$ and $H'$. Let $\lambda: V(H) \to X$ and $\lambda ':V(H') \to X'$ be labelings, where $X \cap X' = \emptyset$. Let $\Gamma = \fer(H)$, and $\Gamma' = \fer(H')$. Then $R_H, R_{H'}\subseteq R_G$ and $\Gamma \times \Gamma' \cong  \{ \sigma \cup \tau\;|\; \sigma \in \Gamma, \; \tau \in \Gamma'\} \le {\rm Stab}_{\fer(G)}(X)$.
\end{lemma}
\begin{proof}
It is easy to see that $R_H, R_{H'} \subseteq R_G$. Let $\sigma \in \Gamma$  and $\tau \in \Gamma'$. By definition,  $\sigma = \sigma_q \sigma_{q-1} \cdots \sigma_1$ for certain $\sigma_1, \cdots, \sigma_q \in \mathcal{E}_H$, while $\tau = \tau_{q'} \tau_{q'-1} \cdots \tau_1$ for certain $\tau_1, \cdots, \tau_{q'} \in \mathcal{E}_{H'}$. Without loss of generality, assume that $q \ge q'$. Define $\tau_j = {\rm id}_{\Gamma'}$ for $q'+1 \le j \le q$. For $1 \le j \le q$, let $\rho_j =  \sigma_j \cup \tau_j$.
Since $R_H, R_{H'} \subseteq R_G$ and, for $1 \le j \le q$,
\[G_{\rho_j} = (H \cup H')_{\rho_j} = H_{\sigma_j} \cup H'_{\tau_j},\] 
then $\rho_1, \cdots, \rho_q \in \mathcal{E}_G$. 
Moreover, $\rho  = \rho_q \rho_{q-1} \cdots \rho_1$ and $\rho \in {\rm Stab}_{\Gamma}(X)$.  Since $\Gamma\times \Gamma' \cong  \{\sigma \cup \tau \;|\; \sigma \in \Gamma, \; \tau \in \Gamma'\},$ the latter is a subgroup of ${\rm Stab}_{\fer(G)}(X)$.
\end{proof}

The following lemma deals with edge-replacements that involve, or not, isolated vertices. It is necessary for the proof of Theorem \ref{thm:eq}.

\begin{lemma}\label{la:prop_GUtK_1}
Let $G$ be a graph without isolated vertices, and let $U$ be a set of $t$ vertices disjoint from $V(G)$,  for some $t \ge 1$. Let $G^* = G \cup U \cong G \cup t K_1$. Let $\lambda: V(G) \to X$ and $\lambda: V(G^*) \to X \cup Y$ be labelings such that $\lambda^* \restrict{V(G)} = \lambda$, and let $\Gamma = \fer(G)$ and $\Gamma^* = \fer(G^*)$. We have the following properties:
\begin{enumerate}
\item[(i)] If $e \to e' \in R_G$ and $\sigma \in \Gamma^*(e \to e')$, then $\sigma \in {\rm Stab}_{\Gamma^*}(X)$ and $\sigma\restrict{X} \in \Gamma(e \to e')$. In particular,  $A_{G^*} \leq {\rm Stab}_{\Gamma^*}(X)$ and $A_G = \{\varphi\restrict{X} \;|\; \varphi \in A_{G^*}\}$.
\item[(ii)] If $\sigma \in \mathcal{E}_{G^*}$ is such that with $\sigma(r) = k$ for some $r \in X$ and $k \in Y$, then 
\begin{enumerate}
\item[$\bullet$] either $\sigma =  \varphi \circ (r\; k)$ for some  $\varphi \in \mathcal{E}_{G^*}  \cap {\rm Stab}_{\Gamma^*}(X)$, 
\item[$\bullet$] or $\sigma =  \varphi \circ (r\; k)(s \; l)$ for some  $s \in X\setminus \{r\}$, $l \in Y \setminus\{k\}$ and $\varphi \in A_{G^*}$.
\end{enumerate}
\end{enumerate}
\end{lemma}

\begin{proof}
(i) If $e \to e'= \emptyset \to \emptyset$ and 
$\sigma \in  A_{G^*}$, then $\sigma$ can only permute elements inside $X$ (via an automorphism of $G$) or inside $Y$. Hence, $\sigma \in {\rm Stab}_{\Gamma^*}(X)$, and it follows easily that $A_{G^*} \leq {\rm Stab}_{\Gamma^*}(X)$, and $A_G = \{\varphi\restrict{X} \;|\; \varphi \in A_{G^*}\}$.
Now suppose that $e \to e'\in R_G^*$ and let $e = rs$ and $e' = kl$, where $r,s,k,l \in X$. Since $G$ has no isolates, it follows that  $\sigma \in {\rm Stab}_{\Gamma^*}(X)$. Then, by definition,
\[G^*_{\sigma} = G^* - v_rv_s + v_kv_l = (G - v_rv_s + v_kv_l) \cup X= G_{\sigma\restrict{X}} \cup U.\]
Hence, $\sigma\restrict{X} \in \Gamma(e \to e')$.\\

\noindent
(ii) Since $k \in Y$, we have $\deg_{G^*}(v_k) = 0$. Having that $\sigma \in \mathcal{E}_{G^*}$, $\sigma(r) = k$, and $G$ has no isolates, we conclude that $\deg_{G^*}(v_r) = 1$ in view of Observation \ref{obs:trivialDEG}.  Moreover, there must be some $s \in X$ and some $l \in (X \cup Y)\setminus \{k\}$ such that $\sigma \in \Gamma^*(rs \to kl)$.\\
Suppose first that $l \in X$.
Observe that $(r\;  k)\in \Gamma^*(rs \to ks) \subseteq \mathcal{E}_{G^*}$, and thus $E(G^*_{(r \; k)})=E(G^*- v_rv_s + v_kv_s)$. Therefore,
\begin{align*}
E(G^*_{\sigma}) & =  E(G^* - v_rv_s + v_lv_k)  \\
& = E((G^* - v_rv_s + v_sv_k) -v_sv_k+ v_lv_k) \\
& = E\left(G^*_{(r \; k)}-v_sv_k+ v_lv_k\right).
\end{align*}
But in the copy  $G^*_{(r \; k)}$ the vertex $v_k$ has label $r$, thus $G^*_{\sigma}=G^*_{\varphi  \circ (r \; k)}$, where $\varphi  \in \Gamma^*(sr \to lr)  \subseteq \mathcal{E}_{G^*}$. Finally, since  $r,s,l \in X$, item (i) yields that $\varphi \in {\rm Stab}_{\Gamma^*}(X)$. \\ 
Now suppose that $l \in Y$. In this case we have $\deg_{G^*}(v_k)=\deg_{G^*}(v_l) = 0$, and by Observation \ref{obs:trivialDEG} it follows that $\deg_{G^*}(v_r) =\deg_{G^*}(v_s)  = 1$. Hence,
\[E(G^*_{\sigma}) =  E(G^* - v_rv_s + v_lv_k)  = E\left(G^*_{(r \; k)(s \; l)}\right).\]
It follows now that there is a $\varphi \in A_{G^*}$ such that $\sigma =\varphi \circ (r\; k) (s \; l)$. 
\end{proof}

\section*{Acknowledgements}

We are thankful to the anonymous referees for their much appreciated valuable and constructive suggestions that helped improve this paper.

We also would like to thank BIRS-CMO for hosting the workshop Zero-Sum Ramsey Theory: Graphs, Sequences and More 19w5132, in which the authors of this paper were organizers and participants, and where many fruitful discussions arose that contributed to a better understanding of these topics.

The second and third authors were supported by PAPIIT IG100822.

\end{document}